\documentclass{amsart}

\usepackage{graphicx, amssymb, amsmath,amsthm,mathtools,enumitem,yhmath,todonotes,verbatim, url}
\usepackage[T1]{fontenc}
\usepackage[osf]{mathpazo}
\usepackage{eucal}
\usepackage{epigraph}
\usepackage[colorlinks=true,linkcolor=cyan,citecolor=cyan]{hyperref}

\newtheorem*{main_theorem:1}{Main Theorem 1}
\newtheorem*{main_theorem:2}{Main Theorem 2}
\newtheorem{theorem}{Theorem}[section]
\newtheorem{lemma}[theorem]{Lemma}
\newtheorem{corollary}[theorem]{Corollary}
\newtheorem{proposition}[theorem]{Proposition}
\newtheorem{question}[theorem]{Question}

\newtheorem{fact}[theorem]{Fact}

\theoremstyle{definition}
\newtheorem{definition}[theorem]{Definition}

\newtheorem{example}[theorem]{Example}

\theoremstyle{remark}

\numberwithin{equation}{section}

\newenvironment{enumerate-(a)}{\begin{enumerate}[label={\upshape (\alph*)}, leftmargin=2pc]}{\end{enumerate}}
\newenvironment{enumerate-(a)-r}{\begin{enumerate}[label={\upshape (\alph*)}, leftmargin=2pc,resume]}{\end{enumerate}}
\newenvironment{enumerate-(a)-5}{\begin{enumerate}[label={\upshape (\alph*)}, leftmargin=2pc,start=5]}{\end{enumerate}}
\newenvironment{enumerate-(A)}{\begin{enumerate}[label={\upshape (\Alph*)}, leftmargin=2pc]}{\end{enumerate}}
\newenvironment{enumerate-(A)-r}{\begin{enumerate}[label={\upshape (\Alph*)}, leftmargin=2pc,resume]}{\end{enumerate}}
\newenvironment{enumerate-(i)}{\begin{enumerate}[label={\upshape (\roman*)}, leftmargin=2pc]}{\end{enumerate}}
\newenvironment{enumerate-(i)-r}{\begin{enumerate}[label={\upshape (\roman*)}, leftmargin=2pc,resume]}{\end{enumerate}}
\newenvironment{enumerate-(I)}{\begin{enumerate}[label={\upshape (\Roman*)}, leftmargin=2pc]}{\end{enumerate}}
\newenvironment{enumerate-(I)-r}{\begin{enumerate}[label={\upshape (\Roman*)}, leftmargin=2pc,resume]}{\end{enumerate}}
\newenvironment{enumerate-(1)}{\begin{enumerate}[label={\upshape (\arabic*)}, leftmargin=2pc]}{\end{enumerate}}
\newenvironment{enumerate-(1)-r}{\begin{enumerate}[label={\upshape (\arabic*)}, leftmargin=2pc,resume]}{\end{enumerate}}

\DeclareMathOperator{\supp}{supp}

\title{Compactness for almost projective modules}

\author[F. Calderoni]{Filippo Calderoni}
\address{Department of Mathematics, Rutgers University, Hill Center for the Mathematical
Sciences, 110 Frelinghuysen Rd., Piscataway, NJ 08854-8019}
\email{filippo.calderoni@rutgers.edu}

\author[E.~Yanowitz]{Esm\'e Yanowitz}
\address{Department of Mathematics, Brown University, Providence, RI 02912 USA.}
\email{esme\_yanowitz@brown.edu}

\begin{document}

\begin{abstract}
In this paper we contribute to the study of compactness properties for algebraic structures. We prove two compactness theorems in the context of almost projective modules from \(\lambda\)-strongly compact and weakly compact, respectively.
\end{abstract}

\maketitle

\section{Introduction}

The application of set-theoretic methods, and particularly large cardinals, has been fruitful in both infinite abelian group theory and in the theory of almost free modules. Some of the most striking results in this area revolve around the notion of compactness. (E.g., see~\cite{Magidor_Shelah_1994}.)

Roughly speaking, compactness is a property in which a structure is characterized by
its local behavior. Such a phenomenon is the result of some set theoretical assumptions of largeness.  For example, a celebrated compactness theorem of Shelah~\cite{Shelah_1975} shows that singularity implies compactness for almost free abelian groups. More precisely, let \(\kappa\) be a singular cardinal and \(G\) be an abelian group with \(|G|=\kappa\). If all subgroups of \(G\) with cardinality less than \(\kappa\) are free, then \(G\) is free. One obtains the same conclusion if \(\kappa\) satisfies large cardinal assumptions. The case when \(\kappa\) is weakly compact is thoroughly discussed in the monograph of Eklof and Mekler~\cite[Section~IV]{Eklof_Mekler_2002} and in the more recent book of Fuchs~\cite{Fuchs_15}. For an account of compactness results we refer the reader to the survey article of Honzik~\cite{Honzik_2026}. Isolating new compactness theorems is the main motivation for this work.

While compactness results of free abelian groups have been generalized to free \(R\)-modules in \cite{Eklof_Mekler_2002}, it is more challenging to consider less restrictive assumptions, such as looking at projective modules . In this work, we concentrate on projectivity and prove the following main results:

\begin{theorem}
\label{result!}
    Let $\kappa$ be a weakly compact cardinal. If $M$ is a $\leq\! \kappa$-generated module which is $\kappa$-projective, then $M$ is projective.
\end{theorem}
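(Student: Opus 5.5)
The plan is to follow the classical weakly compact argument for almost free groups (Eklof--Mekler, Section~IV), with freeness replaced by projectivity. Here ``\(\kappa\)-projective'' means that every \(<\kappa\)-generated submodule of \(M\) is contained in a \(<\kappa\)-generated projective submodule; equivalently, \(M\) has a \(\kappa\)-dense family of projective submodules that is closed under unions of short chains. If \(M\) is \(<\kappa\)-generated it is already projective, so assume \(M\) is generated by \(\{m_\alpha : \alpha<\kappa\}\). Set \(M_\alpha=\langle m_\beta:\beta<\alpha\rangle\). The sequence \((M_\alpha)_{\alpha<\kappa}\) is a filtration of \(M\) by \(<\kappa\)-generated submodules, continuous at limits.

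The key reduction is Kaplansky-type: a module is projective if it has a continuous filtration \(0=N_0\subseteq N_1\subseteq\cdots\) whose successive quotients \(N_{\alpha+1}/N_\alpha\) are projective. Each \(N_{\alpha+1}\) then splits as \(N_\alpha\oplus P_\alpha\), and \(M\cong\bigoplus_\alpha P_\alpha\). By the analogue of Eklof's criterion, it therefore suffices to show that the set
\[
E=\{\alpha<\kappa : M/M_\alpha \text{ is not } \kappa\text{-projective}\}
\]
is non-stationary. Concretely, suppose \(C\) is a club avoiding \(E\) on which each \(M_\alpha\) is a projective summand-like piece. Then for \(\alpha<\beta\) consecutive in \(C\), the quotient \(M_\beta/M_\alpha\) is a \(<\kappa\)-generated submodule of the \(\kappa\)-projective module \(M/M_\alpha\). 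Refining \(C\), I can arrange that \(M_\beta/M_\alpha\) is itself one of the dense projective submodules of \(M/M_\alpha\). This refinement is the standard closure argument: a club of \(\alpha\) closed under the Skolem functions that pick projective enlargements. The quotients are then projective and the filtration yields projectivity of \(M\).

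The main step is to prove that \(E\) is non-stationary, using weak compactness. Suppose instead that \(E\) is stationary. Weakly compact cardinals are \(\Pi^1_1\)-indescribable. The statement ``\(E\) is stationary'', together with the structure \((M,(M_\alpha),R)\) coded as a subset of \(V_\kappa\) (transferred to \(\kappa\) by a bijection, since \(\kappa\) is inaccessible), is \(\Pi^1_1\) over \(V_\kappa\). Reflection gives a regular uncountable \(\lambda<\kappa\) such that \(E\cap\lambda\) is stationary in \(\lambda\) and the restricted data are coherent.

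One must then check two things. First, \(M_\lambda\) is \(<\kappa\)-generated, so it is projective by \(\kappa\)-projectivity of \(M\). Second, the failure ``\(M/M_\alpha\) is not \(\kappa\)-projective'' reflects as ``\(M_\lambda/M_\alpha\) is not \(\lambda\)-projective''. The second point is delicate: for reflection to preserve it, non-\(\kappa\)-projectivity must be witnessed in a \(\Pi^1_1\)-friendly way, via the existence of a \(<\kappa\)-generated subquotient not contained in any projective piece. Given both, \(M_\lambda\) is projective but has a filtration whose associated ``bad'' set \(E\cap\lambda\) is stationary in \(\lambda\). This contradicts the converse direction of the Eklof-type criterion at the regular cardinal \(\lambda\): for a projective \(\lambda\)-generated module, every filtration by \(<\lambda\)-generated submodules agrees on a club with one by direct summands with projective complements. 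By Kaplansky, a projective module is a direct sum of countably generated modules, and I would use that decomposition to produce this club.

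I expect the main obstacle to be making the reflection step honest, namely formulating ``\(\kappa\)-projective'' and its failure so that they transfer correctly between \(\kappa\) and \(\lambda\). In particular, one needs that being projective for a \(<\kappa\)-generated module is absolute between \(V_\kappa\) and \(V\). This is fine because the relevant splittings and free modules have size \(<\kappa\) and live in \(V_\kappa\). If the direct \(\Pi^1_1\) formulation proves awkward, I would instead use the tree property/embedding characterization. Take an elementary embedding \(j\colon \mathcal M\to\mathcal N\) with critical point \(\kappa\) for a \(\kappa\)-model \(\mathcal M\) containing \(M\) and the filtration. In \(\mathcal N\), the module \(j(M)_\kappa = M\) is a \(<j(\kappa)\)-generated submodule of \(j(M)\), hence projective in \(\mathcal N\), and thus projective in \(V\), since a splitting witness transfers.
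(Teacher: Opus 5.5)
The load-bearing error is the sentence ``$M_\lambda$ is $<\kappa$-generated, so it is projective by $\kappa$-projectivity of $M$''. Your fallback has the same error: ``$j(M)_\kappa=M$ is a $<j(\kappa)$-generated submodule of $j(M)$, hence projective in $\mathcal{N}$''. In the paper's definition, $\kappa$-projectivity gives a family $\mathcal{C}$ of $<\kappa$-generated projective submodules that is cofinal among small subsets and closed under unions of chains of length $<\kappa$. It does not make small submodules projective, and over a non-hereditary ring they need not be. For example, over $R=\mathbb{Z}/4\mathbb{Z}$ the module $R^{(\kappa)}$ is projective, hence $\kappa$-projective, yet it contains $2R\cong\mathbb{Z}/2\mathbb{Z}$, which is not projective. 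This is exactly where the projective case departs from the $\kappa$-free case you are imitating. Your own first formulation (``every $<\kappa$-generated submodule is contained in a $<\kappa$-generated projective one'') does not give it either. Relatedly, your opening ``equivalently'' is unjustified. Cofinality of small projective submodules does not by itself give a cofinal family closed under short unions, and you need that closure twice: in your refinement step (to get $M_\beta/M_\alpha$ projective for $\beta$ in a club) and for the repair. The repair: since $\mathcal{C}$ is cofinal and chain-closed and the filtration is continuous, $D=\{\alpha<\kappa: M_\alpha\in\mathcal{C}\}$ is a club, and the reflection point must be taken in $D$. In the $\Pi^1_1$ version, add ``$D$ is unbounded'' to the reflected sentence, which forces $\lambda\in D$. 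This is precisely what the paper does when it chooses $\lambda\in T$ with $M_\lambda$ projective, using that $T$ is stationary and that the set of $\alpha$ with $M_\alpha$ projective contains a club.

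The step you call delicate is also left undone. The witness you suggest (a small subquotient not contained in any projective piece) does not characterize failure of $\kappa$-projectivity, since failure may come from the chain-closure clause. The right observation is that $\kappa$-projectivity is $\Sigma^1_1$ over $V_\kappa$, because it asserts that some $\mathcal{C}$ exists. So its failure, and the stationarity of your $E$, are $\Pi^1_1$.

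More to the point, once $D$ is available your fallback proves the theorem by itself and $E$ is superfluous. Since $j(D)\cap\kappa=D$ is unbounded in $\kappa$ and $j(D)$ is closed in $\mathcal{N}$, we get $\kappa\in j(D)$. Hence $\mathcal{N}\models$ ``$j(\vec M)_\kappa=\bigcup_{\alpha<\kappa}M_\alpha=M$ is projective''. A splitting $p\circ s=\mathrm{id}_M$ through a free module found in the transitive model $\mathcal{N}$ is a genuine splitting in $V$. (Equivalently, ``$M$ is not projective'' is $\Pi^1_1$ and would reflect to some $\lambda\in D$.)

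That route is much shorter than the paper's, which goes through Eklof's $\Gamma$-invariant, the criterion that a $\kappa$-projective module is projective iff $\Gamma(M)=0$, and simultaneous reflection of $\kappa$ many stationary sets to a regular $\lambda$. What the paper's route buys is that weak compactness enters only through reflection of stationary subsets of $\kappa$, so nothing about $R$ has to be coded. Putting $(M,R)$ into $V_\kappa$ or into a $\kappa$-model, as your route does, requires $|R|<\kappa$.
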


\begin{theorem}
\label{strongresult!}
     Let \(\lambda < \kappa\) with \(\kappa\) \(\lambda\)-strongly compact. Let \(R\) be a left perfect, right coherent ring of cardinality \(< \! \lambda\). If \(M\) is a \(\kappa\)-projective  \(R\)-module,  \(M\) is projective.
\end{theorem}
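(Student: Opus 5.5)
The statement immediately above is Theorem~\ref{strongresult!}. The plan is to reduce projectivity to flatness plus perfectness, and then use strong compactness to push flatness from small submodules to \(M\).

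Over a left perfect ring, every flat left module is projective (Bass). So it suffices to show that \(M\) is flat. Flatness is witnessed by the vanishing of \(\mathrm{Tor}_1^R(N,M)\) for every finitely presented right module \(N\). Equivalently, by the equational criterion, every relation \(\sum_i r_i m_i=0\) in \(M\) with \(r_i\in R\), \(m_i\in M\) factors through a free module. Such a relation mentions only finitely many elements of \(M\), so it lives inside a \(<\!\kappa\)-generated submodule. That submodule is projective, hence flat. Hence the equational criterion holds for \(M\) itself, since the criterion is local: flatness is preserved under directed unions. Indeed, \(M\) is the directed union of its \(<\!\kappa\)-generated submodules, all of which are projective by \(\kappa\)-projectivity, and \(\mathrm{Tor}\) commutes with directed colimits. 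So \(M\) is flat, hence projective.

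This argument seems to make the large cardinal unnecessary, which suggests I am misreading the hypotheses. Perhaps ``\(\kappa\)-projective'' in the paper means something weaker, for instance that every \(<\!\kappa\)-generated submodule is contained in a \(<\!\kappa\)-generated projective submodule, or that \(M\) has a \(\kappa\)-dense system of projective submodules. I would check the paper's definition. Under the literal reading (every \(<\!\kappa\)-generated submodule is projective), the proof above is complete and uses only left perfectness.

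If the paper intends a finer conclusion, such as \(M\) being a direct sum of \(<\!\lambda\)-generated projectives, I would proceed as follows.

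\begin{enumerate-(1)}
\item Right coherence together with left perfectness makes the class of projectives closed under products. Each projective is then a direct sum of cyclic indecomposables \(Re\), with \(e\) a primitive idempotent, of which there are fewer than \(\lambda\) isomorphism types because \(|R|<\lambda\).
\item Use a \(\kappa\)-complete fine ultrafilter on \(\mathcal P_\kappa(M)\), given by \(\lambda\)-strong compactness, to build a \(\lambda\)-complete ultrapower embedding \(j\colon V\to N\).
\item Show that \(j''M\) sits inside some \(<\!j(\kappa)\)-generated, hence projective, submodule of \(j(M)\). Since \(R\) is fixed by \(j\) (as \(|R|<\lambda\)), this gives a decomposition that can be pulled back, exploiting that the ultraproduct of projectives is projective by the closure under products.
\end{enumerate-(1)}

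The main obstacle is the pullback step: transferring a splitting of \(j''M\) inside \(j(M)\) back to a splitting of \(M\). I would try to do this by projecting along a \(\lambda\)-complete ultrafilter, in the style of Eklof--Mekler's proof for free modules.
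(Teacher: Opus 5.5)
Your first argument is correct and proves the statement. Your suspicion that you misread the hypotheses is unfounded: the large cardinal really is superfluous here.

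One sentence needs adjusting. The paper's definition of \(\kappa\)-projective is the ``family \(\mathcal{C}\)'' version that you listed as a weaker alternative, not ``every \(<\!\kappa\)-generated submodule is projective.'' So the submodule generated by \(m_1,\dots,m_n\) need not be projective. However, by condition (ii) the finite set \(\{m_1,\dots,m_n\}\) lies in some \(C\in\mathcal{C}\), which is projective and hence flat. The relation \(\sum_i r_i m_i=0\) therefore trivializes inside \(C\), hence inside \(M\). Equivalently, \(\mathcal{C}\) is directed and covers \(M\): if \(C_1,C_2\) are generated by \(X_1,X_2\) of size \(<\!\kappa\), then \(X_1\cup X_2\) lies in some \(C_3\in\mathcal{C}\), so \(C_1+C_2\subseteq C_3\). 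Thus \(M\) is a direct limit of projectives, hence flat, hence projective by Theorem~\ref{theorem:Bass}. Nothing beyond left perfectness is used. The second half of your proposal, with the embedding \(j\) and the unresolved pullback step, should simply be dropped.

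The paper argues quite differently, deriving the theorem from Lemma~\ref{lem:strong_compactness_result} together with the Sabbagh--Eklof theorem:
\begin{enumerate-(1)}
\item It extends the filter generated by the cones \(S_Y\) on \(\mathcal{P}_\kappa(M)\) to a \(\lambda\)-complete ultrafilter \(\mathcal{U}\).
\item It embeds \(M\) into \(\prod_Y\langle Y\rangle/\mathcal{U}\) via \(m\mapsto\overline{\mathbf m}\).
\item It shows the ultraproduct is projective: right coherence makes products of flat modules flat, reduced products stay flat by purity, and left perfectness turns flat into projective.
\item It concludes that the submodule \(M\) is projective.
\end{enumerate-(1)}
That route is a genuine compactness template, reusable for any class closed under ultraproducts and submodules. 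Here, though, it needs \(R\) hereditary, both to make each \(\langle Y\rangle\) projective and, essentially, to pass from the ultraproduct to its submodule \(M\). Hereditariness is not among the theorem's hypotheses and does not follow from them. For example, \(k[x]/(x^2)\) is Artinian, hence perfect and coherent, but its ideal \((x)\cong k\) is not projective.

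Your argument needs no large cardinal, no coherence, no bound on \(|R|\), and no hereditariness, so it proves the statement as written. It in fact shows more: over a left perfect ring, any module in which every finite subset lies in a projective submodule is projective.
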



Previously, Calderoni and Ostrem~\cite{Calderoni_Ostrem} obtained analogous compactness results for
\(\Sigma\)-cyclic groups (direct sums of cyclic groups). In our discussion we develop similar techniques for almost projective \(R\)-modules. (As every abelian group can be regarded as a \(\mathbb{Z}\)-module our analysis has broader scope.)

In Section~\ref{Section: Preliminaries} we cover background and preliminary results.
In Section~\ref{Section: Weak Compactness} we prove Theorem~\ref{result!}. The proof uses \(\Gamma\)-invariants. These methods were first developed by Eklof~\cite{Eklof_1993} and employed in various cases by ~\cite{Baumgartner_Foreman_Spinas_1997,Eklof_Trlifaj_1998,Calderoni_Ostrem, Mihara_2026}.
In Section~\ref{Section: Strong Compactness} we prove Theorem~\ref{strongresult!} using ultraproducts; our discussion involves classical results regarding elementarity of modules from Eklof and Sabbagh~\cite{Sabbagh_Eklof_1971}.




\subsection*{Acknowledgments}
The first author is supported by NSF grant DMS-2348819.
The second author participated in the 2026 REU Program
coordinated by the Center for Discrete Mathematics and Theoretical Computer
Science (DIMACS). We thank DIMACS for providing resources and a
stimulating environment to conduct this research. We thank Cecelia Higgins for useful comments and helpful remarks.

\section{Preliminaries}
\label{Section: Preliminaries}

Throughout the paper \(R\) is a ring with identity and not necessarily commutative.  Unless otherwise specified, \(R\)-modules are unitary left modules. If \(M\) is an \(R\)-module and \(Y \subset M \), then we denote by \(\langle Y \rangle\) the submodule of \(M\) generated the subset \(Y\). If \(|Y| \leq \kappa\), we say that \(\langle Y \rangle\) is \emph{\( \leq \! \kappa\)-generated}.

Now we turn to discuss free and projective modules and their properties. 

\begin{definition}\label{definition:Free_Module}
    An \(R\)-module \(M\) is called \emph{free} if there exists a generating set \(B\) for \(M\) such that, for any \(R\)-module \(N\), every set map \(f \colon B \to N\)  extends to a homomorphism of modules \(\overline{f} \colon M \to N\). In this case, the homomorphism \(\overline{f}\) is necessarily unique and \(B\) is called a \emph{basis} of \(M\). 
\end{definition}

    Recall that a \emph{short exact sequence} of modules consists of an injective homomorphism of modules \(\phi \colon M \to M'\) and a surjective homomorphism of modules \(\psi \colon M' \to M''\) such that \(\operatorname{im} \phi = \ker \psi\). Such a sequence is denoted by 
        \[0 \to M \stackrel{\phi}{\to} M' \stackrel{\psi}{\to} M'' \to 0\]
  and is said to \emph{split} if there exists a map \(f \colon M'' \to M'\) such that \(\psi \circ f = \operatorname{id}_{M''}\). In this case, \(M' \cong M \oplus M''\).

\begin{definition}\label{definition:Projective_Module}
   An \(R\)-module \(P\) is called \emph{projective} if every short exact sequence of the form
        \[0 \to M \to N \to P \to 0\]
  splits.
\end{definition}

Equivalently, an \(R\)-module \(P\) is projective if it satisfies the \emph{lifting property}. That is,
for every surjective \(R\)-module homomorphism \(f \colon N \to M\) and every module homomorphism \(g \colon P \to M\), there exists a module homomorphism \(h \colon P \to N\) such that \(f\circ h = g\).

We also note that for any \(R\)-module \(M\) (and every ring \(R\)), there exists a free \(R\)-module \(F\) and a surjective module homomorphism \(f \colon F \to M\). Therefore, any projective \(R\)-module is a free summand of a free \(R\)-module.


It is not hard to see that every free \(R\)-module satisfies the lifting property. Therefore, for any ring \(R\), free \(R\) modules are projective.

Now let \(R\) be a principal ideal domain (p.i.d.). Recall that every submodule  of a free \(R\)-module is again free. (See~\cite[Chapter~III, Theorem~7.1]{Lang}.) As any projective \(R\)-module \(M\) is submodule of a free \(R\)module, an \(R\)-module \(M\) is projective if and only if \(M\) is free.
In particular, this is the case for \(R=\mathbb{Z}\), i.e., for abelian groups.

Projective modules enjoy a number of closure properties. For instance, direct sums and summands of projective modules are again projective.

In Section~\ref{Section: Weak Compactness} we use a corollary of the so-called \emph{Kaplansky's d\'evissage}, a famous decomposition theorem of Kaplansky~\cite{Kaplansky_1958}. 

\begin{theorem}[Kaplansky, 1958]\label{theorem:Kaplansky}
    Let \(M\) be an \(R\)-module which is a direct sum of countably generated \(R\)-modules. Then any direct summand of \(M\) is a direct sum of countably generated \(R\)-modules. 
\end{theorem}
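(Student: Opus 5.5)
We prove Kaplansky's theorem by the standard d\'evissage: if \(M = P \oplus Q\) with \(M = \bigoplus_{i \in I} M_i\) and each \(M_i\) countably generated, we build a continuous increasing chain of submodules \((N_\alpha)_{\alpha \le \tau}\) with \(N_0 = 0\) and \(N_\tau = M\). Each \(N_\alpha\) will have two properties:
\begin{enumerate-(i)}
\item \(N_\alpha = \bigoplus_{i \in I_\alpha} M_i\) for some \(I_\alpha \subseteq I\);
\item \(N_\alpha = (N_\alpha \cap P) \oplus (N_\alpha \cap Q)\).
\end{enumerate-(i)}
Each step \(N_{\alpha+1}/N_\alpha\) will be countably generated. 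Then \(P\) is the union of the continuous chain \(P_\alpha = N_\alpha \cap P\). By modularity and (ii), \(P_{\alpha+1} = P_\alpha \oplus C_\alpha\), where \(C_\alpha\) is isomorphic to a direct summand of \(N_{\alpha+1}/N_\alpha\), hence countably generated. Transfinite induction then gives \(P = \bigoplus_\alpha C_\alpha\), which is the conclusion.

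The key step is the countable closure argument. Given \(N_\alpha\) and some index \(i \notin I_\alpha\), we construct a countable set \(J \subseteq I\) containing \(i\) such that \(N' = \bigoplus_{j \in I_\alpha \cup J} M_j\) satisfies (ii). Let \(\pi_P, \pi_Q\) be the projections onto \(P\) and \(Q\). We proceed by a zig-zag:
\begin{itemize}
\item Start with \(J_0 = \{i\}\), and fix a countable generating set \(X_0\) of \(M_{J_0}\).
\item Apply \(\pi_P\) and \(\pi_Q\) to every element of \(X_0\). Each image is a finite sum of components in finitely many \(M_j\), so only countably many new indices appear. Add these to obtain \(J_1\), and let \(X_1\) be a countable generating set for \(\bigoplus_{j \in J_1} M_j\).
\item Iterate \(\omega\) times and set \(J = \bigcup_n J_n\).
\end{itemize}

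Now we check (ii) for \(N'\). The module \(N'\) is generated by \(N_\alpha\) together with \(\bigcup_n X_n\). Since \(\pi_P\) and \(\pi_Q\) are \(R\)-linear, it suffices to check that they map generators into \(N'\). For elements of \(N_\alpha\) this holds by the inductive hypothesis (ii). For elements of \(\bigcup_n X_n\) it holds by construction. Hence \(\pi_P(N') \subseteq N'\), which yields \(N' = (N' \cap P) \oplus (N' \cap Q)\).

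At limit stages both properties pass to unions. Property (ii) holds at limits because each element lies in some earlier \(N_\beta\). To ensure the chain exhausts \(M\), we well-order \(I\) and at each successor stage take \(i\) to be the least index not yet in \(I_\alpha\).

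The main obstacle is the bookkeeping in the closure step: we must confirm that countability is preserved through the \(\omega\)-iteration and that the resulting quotient \(N_{\alpha+1}/N_\alpha \cong \bigoplus_{j \in J \setminus I_\alpha} M_j\) is countably generated. Both follow because a countable union of countable sets is countable and each \(M_j\) is countably generated.
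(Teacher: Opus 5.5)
The paper gives no proof of this theorem: it quotes Kaplansky's 1958 result and only applies it to obtain Corollary~\ref{corollary:Kaplansky}. Your argument is Kaplansky's original d\'evissage, and it is correct. The countable closure under $\pi_P$ and $\pi_Q$, continuity at limit stages, and exhaustion of $I$ via a well-ordering are all handled properly. You also rightly avoid assuming that the $M_i$ are projective.

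One small precision concerns the splitting $P_{\alpha+1} = P_\alpha \oplus C_\alpha$, which you attribute to modularity and (ii). It actually also needs (i). Put $D_\alpha = \bigoplus_{j \in I_{\alpha+1}\setminus I_\alpha} M_j$, so that $N_{\alpha+1} = P_\alpha \oplus Q_\alpha \oplus D_\alpha$. The modular law then gives the explicit complement $C_\alpha = P_{\alpha+1} \cap (Q_\alpha \oplus D_\alpha)$. Property (ii) alone would not suffice; for instance, it is vacuous when $Q = 0$. Nor can you split off $P_{\alpha+1}/P_\alpha$ by projectivity, since the $M_j$ are arbitrary countably generated modules.
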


We recall that every projective module is the direct summand of some free module. Since each free $R$-module is the direct sum of some number of copies of $R$ an application of Theorem~\ref{theorem:Kaplansky} yields the following corollary.

\begin{corollary}\label{corollary:Kaplansky}
    Every projective module is a direct sum of countably generated projective modules. 
\end{corollary}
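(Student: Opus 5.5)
The plan is to apply Theorem~\ref{theorem:Kaplansky} directly. Since the corollary concerns an arbitrary projective module \(P\), I first realize \(P\) as a direct summand of a free module. Then I check that a free module meets the hypothesis of Kaplansky's theorem. Finally, I check that the countably generated pieces the theorem produces are themselves projective.

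\textbf{Realizing \(P\) as a summand.} Choose a free \(R\)-module \(F\) with a surjection \(f \colon F \to P\); this exists by the remark above. The short exact sequence
\[0 \to \ker f \to F \stackrel{f}{\to} P \to 0\]
splits because \(P\) is projective. Hence \(F \cong \ker f \oplus P\), and \(P\) is isomorphic to a direct summand of \(F\).

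\textbf{Checking the hypothesis of Theorem~\ref{theorem:Kaplansky}.} If \(B\) is a basis of \(F\), then \(F = \bigoplus_{b \in B} Rb\) and each \(Rb \cong R\). Each summand \(Rb\) is cyclic, so it is countably generated, and \(F\) is a direct sum of countably generated modules. Theorem~\ref{theorem:Kaplansky} then gives a decomposition \(P \cong \bigoplus_{i \in I} P_i\) with each \(P_i\) countably generated. Since being a direct sum of countably generated modules is invariant under isomorphism, there is no harm in identifying \(P\) with its copy inside \(F\).

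\textbf{Projectivity of the pieces.} Each \(P_i\) is a direct summand of \(P\), since \(P = P_i \oplus \bigoplus_{j \neq i} P_j\). Direct summands of projective modules are projective, by the closure property recalled above. Concretely, given a surjection \(N \to M\) and a map \(P_i \to M\), extend the map by zero on the complement, lift it to \(P \to N\), and restrict back to \(P_i\).

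There is no real obstacle here. The only step that needs care is this last one, checking that the pieces \(P_i\) are projective and not merely countably generated.
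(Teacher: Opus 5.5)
Your proof is correct and takes the same route as the paper: realize \(P\) as a direct summand of a free module, observe that the free module is a direct sum of copies of \(R\), and apply Theorem~\ref{theorem:Kaplansky}. Your explicit check that each countably generated piece \(P_i\) is projective, as a summand of \(P\), fills in a step the paper leaves implicit.
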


In this paper we will deal with the notion $\kappa$-projective, which is an analogue of $\kappa$-free (e.g, see \cite{Eklof_Mekler_2002}). 

\begin{definition}\label{definition:Almost-Projective_or_Kappa-Projective}
    A \(\leq \!\kappa\)-generated \(R\)-module \(M\) is \emph{$\kappa$-projective} if and only if there is a subset \(\mathcal{C}\) of the power set on \(M\) satisfying:
    \begin{enumerate-(i)}
        \item Every element of \(\mathcal{C}\) is a \(<\!\kappa\)-generated projective submodule of \(M\); 
        \item Every subset of \(M\) of cardinality \(<\!\kappa\) is contained in an element of \(\mathcal{C}\);
        \item The set \(\mathcal{C}\) is closed under unions of well-ordered chains of length \(< \!\kappa\).
    \end{enumerate-(i)}
    The terms \emph{almost projective} and \emph{\(\kappa\)-projective} may be used interchangeably.
\end{definition}

\begin{definition}\label{definition:Kappa-Filtration}
    Let \(M\) be a \(\leq \!\kappa\)-generated \(R\)-module. A \emph{\(\kappa\)-filtration} of \(M\) is a indexed set \(\{M_{\alpha} : \alpha < \kappa\}\) of submodules of \(M\) such that 
        \begin{enumerate-(i)}
            \item \(M = \bigcup_{\alpha < \kappa}M_{\alpha}\);
            \item Each \(M_{\alpha}\) is a \(<\! \kappa\)-generated submodule of \(M\);
            \item If \(\beta < \alpha\) then \(M_{\beta} \subseteq M_{\alpha}\);
            \item If \(\alpha\) is a limit ordinal then \(M_{\alpha} = \bigcup_{\beta < \alpha}M_{\beta}\).
        \end{enumerate-(i)}
\end{definition}

Whenever \(M\) is a \(\leq\!\kappa\)-generated \(\kappa\)-projective \(R\) module it is easy to construct a \(\kappa\)-filtration \(\{M_\alpha:\alpha<\kappa\}\) consisting of projective \(R\)-modules. In fact, we have the following characterization:

\begin{proposition}\label{lemma:Almost-Projective_or_Kappa-projective}
A \(\leq\!\kappa\)-generated \(R\)-module \(M\) is \(\kappa\)-projective if and only if  \(M\) has a \(\kappa\)-filtration consisting of projective modules.
\end{proposition}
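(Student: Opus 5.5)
The plan is to prove the two directions separately. Throughout I assume, as in the intended applications, that \(\kappa\) is an uncountable regular cardinal. Regularity is used in the converse direction, and I flag below exactly where.

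\emph{From \(\mathcal{C}\) to a filtration.} Suppose \(\mathcal{C}\) witnesses that \(M\) is \(\kappa\)-projective. Fix a generating set \(\{m_\alpha : \alpha<\kappa\}\) of \(M\), repeating elements if \(M\) is \(<\!\kappa\)-generated. I will build \(M_\alpha\in\mathcal{C}\) by recursion on \(\alpha<\kappa\).
\begin{enumerate-(i)}
\item Let \(M_0\) be an element of \(\mathcal{C}\) containing \(\{m_0\}\).
\item At successor stages, fix a generating set \(Y_\alpha\) of \(M_\alpha\) with \(|Y_\alpha|<\kappa\), which exists by (i) of Definition~\ref{definition:Almost-Projective_or_Kappa-Projective}. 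Apply (ii) to \(Y_\alpha\cup\{m_\alpha\}\) to obtain \(M_{\alpha+1}\in\mathcal{C}\).
\item At limit stages, set \(M_\alpha=\bigcup_{\beta<\alpha}M_\beta\).
\end{enumerate-(i)}
The one subtlety is that \(M_\alpha\) itself may have cardinality \(\ge\kappa\) as a set when \(R\) is large. That is why the successor step applies (ii) to a small generating set rather than to \(M_\alpha\) itself. Since elements of \(\mathcal{C}\) are submodules, containing \(Y_\alpha\) forces \(M_\alpha\subseteq M_{\alpha+1}\). At a limit \(\alpha\), the family \(\{M_\beta:\beta<\alpha\}\) is a well-ordered chain in \(\mathcal{C}\) of length \(<\kappa\). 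So \(M_\alpha\in\mathcal{C}\) by (iii), and it is therefore a \(<\!\kappa\)-generated projective submodule by (i).

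It remains to check the filtration conditions of Definition~\ref{definition:Kappa-Filtration}. Monotonicity and continuity hold by construction. Since \(m_\alpha\in M_{\alpha+1}\) for every \(\alpha\), the submodule \(\bigcup_\alpha M_\alpha\) contains all generators and so equals \(M\).

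\emph{From a filtration to \(\mathcal{C}\).} Conversely, given a \(\kappa\)-filtration \(\{M_\alpha:\alpha<\kappa\}\) of projective modules, I take \(\mathcal{C}=\{M_\alpha:\alpha<\kappa\}\). Condition (i) is immediate.
\begin{enumerate-(i)}
\item For (ii), let \(X\subseteq M\) with \(|X|<\kappa\). Each \(x\in X\) lies in some \(M_{\alpha_x}\). By regularity of \(\kappa\), \(\beta=\sup_x\alpha_x<\kappa\), and \(X\subseteq M_\beta\).
\item For (iii), take a well-ordered chain in \(\mathcal{C}\) of length \(<\kappa\), with indices \(\alpha_\xi\). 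Let \(\beta\) be the supremum of these indices, which is \(<\kappa\) again by regularity. If \(\beta\) is attained, the union is \(M_\beta\). Otherwise \(\beta\) is a limit ordinal and the indices are cofinal in \(\beta\), so by monotonicity and continuity the union is \(\bigcup_{\gamma<\beta}M_\gamma=M_\beta\in\mathcal{C}\).
\end{enumerate-(i)}

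The only real obstacle is bookkeeping. In the forward direction one must handle the set-size versus generator-size issue, as done above. In the converse one needs regularity of \(\kappa\), which is automatic for the weakly compact and strongly compact cardinals considered in this paper.
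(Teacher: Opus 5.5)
Your proof is correct and complete. The paper never writes out a proof of this proposition; it only remarks that such a filtration is ``easy to construct''. Your argument is exactly the standard one that remark alludes to: in the forward direction you choose \(M_{\alpha+1}\in\mathcal{C}\) by applying (ii) to a small generating set of \(M_\alpha\) together with \(m_\alpha\), and use (iii) at limit stages; in the converse you take \(\mathcal{C}=\{M_\alpha:\alpha<\kappa\}\).

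Your regularity assumption is the right one and not merely convenient, since for singular \(\kappa\) the converse fails. Take \(M=k^{(\kappa)}\) over a field \(k\). Any \(\mathcal{C}\) satisfying (ii) and (iii) would contain a chain of length \(\mathrm{cf}(\kappa)<\kappa\) whose union is \(M\), forcing \(M\) to be \(<\!\kappa\)-generated. This matches the paper's implicit standing assumption that \(\kappa\) is regular.
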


\section{Compactness}
\label{Section: Weak Compactness}


In this section we prove Theorem~\ref{result!}. Our argument is purely set-theoretical. It  builds on the work of Eklof~\cite{Eklof_1993} and mimics the proof of \cite[Theorem~1.2]{Calderoni_Ostrem}.

First we recall some definitions in infinite combinatorics. Let \(\gamma\) be an infinite limit ordinal.

\begin{definition}
    A subset \(C\) of \(\gamma\) is said to be a \emph{club}  if and only if:
    \begin{enumerate-(i)}
    \item \(C\) is \emph{closed} in \(\gamma\), i.e., for all \(A \subseteq C\), if \( \sup A <\gamma\), then
\(\sup A \in C\); and
 \item \(C\) is \emph{unbounded} in \(\gamma\), i.e., for every \(\alpha<\gamma\) there is some \(\beta \in C\) with \(\alpha<\beta\).
\end{enumerate-(i)}

Moreover, a subset \(S\subseteq \gamma\) is said to be \emph{stationary} if and only if it intersects every club \(C\subseteq S\). I.e., for all clubs \(C\subseteq \gamma\), we have \(S \cap C \neq \emptyset\).
\end{definition}

Given a family of subsets \(\{A_\alpha: \alpha < \kappa\}\)
of \(\kappa\), we define the \emph{diagonal intersection} of the family by

\[
\bigtriangleup_{\alpha<\kappa} A_\alpha \coloneqq\left\{\beta\in \kappa : \beta \in \bigcup_{\alpha<\beta} A_\alpha \right\}.
\]

The following is a well-known closure property for clubs. To see the proof we refer the reader to \cite[Lemma~8.4]{Jech_2002}.

\begin{fact}
\label{fact : diagonal intersection}
    If \(\{C_\alpha : \alpha < \kappa\}\) is a family of clubs of \(\kappa\), then the diagonal intersection \(\bigtriangleup_{\alpha<\kappa} C_\alpha\)  is also a club in \(\kappa\).
\end{fact}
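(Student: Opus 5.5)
We read the displayed definition of $\bigtriangleup_{\alpha<\kappa}A_\alpha$ with $\bigcap_{\alpha<\beta}A_\alpha$ in place of $\bigcup_{\alpha<\beta}A_\alpha$, since with a union the statement would be a different, weaker one. We assume, as in all intended applications, that $\kappa$ is a regular uncountable cardinal. Put $D=\bigtriangleup_{\alpha<\kappa}C_\alpha=\{\beta<\kappa:\beta\in C_\alpha \text{ for all } \alpha<\beta\}$. The plan is to verify closedness and unboundedness of $D$ directly, using only the definition of club and the regularity of $\kappa$.

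\emph{Step 1: fewer than $\kappa$ clubs have club intersection.} We first check that if $\mu<\kappa$ and $\{E_\xi:\xi<\mu\}$ are clubs in $\kappa$, then $E=\bigcap_{\xi<\mu}E_\xi$ is a club. Closedness is immediate, since each $E_\xi$ is closed. For unboundedness we induct on $\mu$; the successor case is the main content, and limit stages follow by applying the argument below to the decreasing sequence of partial intersections.

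Given $\gamma<\kappa$, we build an increasing sequence $\gamma<\gamma_0<\gamma_1<\cdots$ of length $\omega\cdot\mu$ (or simply $\omega$ rounds, each cycling through all $\xi<\mu$). At each step we pick the next element above the previous ones inside the appropriate $E_\xi$. Regularity of $\kappa$ ensures that every supremum taken along the way stays below $\kappa$. The final supremum $\delta$ is a limit of elements of every $E_\xi$, so $\delta\in E$ and $\delta>\gamma$.

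\emph{Step 2: $D$ is closed.} Let $\beta<\kappa$ with $\sup(D\cap\beta)=\beta$, and fix $\alpha<\beta$. Every $\delta\in D$ with $\alpha<\delta<\beta$ lies in $C_\alpha$ by the definition of $D$, and such $\delta$ are cofinal in $\beta$. Hence $\beta$ is a supremum of a subset of $C_\alpha$, so $\beta\in C_\alpha$ because $C_\alpha$ is closed. Since $\alpha<\beta$ was arbitrary, $\beta\in D$.

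\emph{Step 3: $D$ is unbounded.} This is the step I expect to carry the real weight. Given $\gamma<\kappa$, set $\beta_0=\gamma+1$. Using Step~1, choose
\[
\beta_{n+1}\in\bigcap_{\alpha<\beta_n}C_\alpha \quad\text{with } \beta_{n+1}>\beta_n ,
\]
which is possible because this is an intersection of $|\beta_n|<\kappa$ clubs. Let $\beta=\sup_n\beta_n$; then $\beta<\kappa$ since $\kappa$ has uncountable cofinality.

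Now take any $\alpha<\beta$. Pick $n$ with $\alpha<\beta_n$. Then $\beta_m\in C_\alpha$ for all $m>n$, so $\beta$ is a limit of elements of $C_\alpha$, and therefore $\beta\in C_\alpha$. Thus $\beta\in D$ and $\beta>\gamma$, which completes the argument.
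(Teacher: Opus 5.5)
Your proof is correct and is essentially the standard argument the paper cites (Jech, Lemma~8.4): closedness of each $C_\alpha$ gives closedness of the diagonal intersection, and unboundedness comes from an $\omega$-sequence with $\beta_{n+1}\in\bigcap_{\alpha<\beta_n}C_\alpha$, using that fewer than $\kappa$ clubs have club intersection (Jech first replaces $C_\alpha$ by $\bigcap_{\xi\le\alpha}C_\xi$, which is the same idea). One small correction to your aside: you are right that $\bigcup$ in the paper's display should be $\bigcap$, but the union reading would make the statement false rather than weaker, since such a ``diagonal union'' of clubs need not be closed.
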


An uncountable cardinal \(\kappa\) is said to be \emph{weakly compact} if for every function
\(f\colon [\kappa]^2 \to \{0, 1\}\), there exists a subset \(A \subseteq \kappa\)
of cardinality \(|A| = \kappa\) such that \(f\) is constant on \([A]^2\) (meaning all unordered pairs in \(A\) are mapped by \(f\) to \(0\), or all are mapped to \(1\)).

Weakly compact cardinals are well-known for their interesting combinatorial properties. The one stated below is a form of stationary reflection, and it will be used later in this section. For a proof we refer the reader to Kanamori’s monograph
\cite[Proposition 4.1]{Kanamori_2003}.

\begin{lemma}
\label{theorem:stationary_reflection}
        Let \(\kappa\) be a weakly compact cardinal and let \(\{S_\alpha: \alpha<\kappa\}\) be a family of stationary subsets of \(\kappa\). Then there is a stationary \(T\subseteq \kappa\) consisting of regular cardinals such that for all \(\lambda \in T\) and \(\alpha<\lambda\), the set
        \( S_\alpha \cap \lambda\) is stationary in \(\lambda\).
    \end{lemma}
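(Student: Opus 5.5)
The plan is to derive this reflection property from the \(\Pi^1_1\)-indescribability of weakly compact cardinals. This is the classical characterization (Hanf--Scott) of the partition-property definition given above, and I will quote it rather than reprove it. It says: if \(U \subseteq V_\kappa\) and \(\varphi\) is a \(\Pi^1_1\) sentence such that \((V_\kappa, \in, U) \models \varphi\), then there is \(\lambda<\kappa\) with \((V_\lambda,\in,U\cap V_\lambda)\models \varphi\).

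First I fix the candidate set in advance, independently of any club:
\[
T \coloneqq \{\lambda<\kappa : \lambda \text{ is a regular cardinal and } S_\alpha\cap\lambda \text{ is stationary in } \lambda \text{ for all } \alpha<\lambda\}.
\]
By construction every element of \(T\) is regular and has the required reflection property. So it only remains to show that \(T\) is stationary, i.e.\ that \(T\cap C\neq\emptyset\) for every club \(C\subseteq\kappa\).

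Fix a club \(C\). I code the family by the single predicate \(S=\{(\alpha,\beta):\beta\in S_\alpha\}\subseteq \kappa\times\kappa\subseteq V_\kappa\), and work in the structure \((V_\kappa,\in,S,C)\). The conjunction of the following statements holds there:
\begin{enumerate-(1)}
\item \(\kappa\) is regular. Since \(\kappa\) is inaccessible, this is first order over \(V_\kappa\): every function from an ordinal \(\beta\) into the ordinals with \(\beta\) in the structure is bounded.
\item \(C\) is unbounded in the ordinals.
\item For every ordinal \(\alpha\) and every \(X\subseteq \mathrm{On}\), if \(X\) is closed and unbounded then some \(\beta\in X\) satisfies \((\alpha,\beta)\in S\).
\end{enumerate-(1)}
The only second-order quantifier is the universal one over \(X\) in (3); the quantifier over \(\alpha\) is first order. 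Hence the conjunction is a \(\Pi^1_1\) sentence, and indescribability yields \(\lambda<\kappa\) at which it reflects.

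Now I unwind what holds in \((V_\lambda,\in,S\cap V_\lambda,C\cap\lambda)\). Statement (1) gives that \(\lambda\) is regular; this needs \(\lambda\) to be a limit ordinal large enough that the relevant functions lie in \(V_\lambda\), which I can arrange by adding ``every ordinal has a successor and the structure is closed under the required pairing'' to the sentence. Statement (2) gives that \(C\cap\lambda\) is unbounded in \(\lambda\), so \(\lambda\in C\) because \(C\) is closed. Statement (3), interpreted with second-order variables ranging over subsets of \(V_\lambda\), says exactly that \(S_\alpha\cap\lambda\) meets every club of \(\lambda\) for each \(\alpha<\lambda\). Thus \(\lambda\in T\cap C\).

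The main obstacle is bookkeeping rather than ideas: I must check that regularity and ``\(\alpha\) is an ordinal'' are absolute between \(V_\lambda\) and \(V\) for the reflected \(\lambda\), and that clubs of \(\lambda\) in the sense of \(V_\lambda\) are genuine clubs of \(\lambda\). Both hold because \(V_\lambda\) contains all subsets of its elements and \(\mathcal{P}(\lambda)\) is the range of the second-order quantifier. If one insists on avoiding indescribability, the alternative route is through the tree property plus inaccessibility, but I would not pursue that, since the \(\Pi^1_1\) route is standard and short.
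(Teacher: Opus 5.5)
The paper gives no proof of this lemma; it cites Kanamori's monograph. Your route through \(\Pi^1_1\)-indescribability (Hanf--Scott) is the standard one, and clauses (2) and (3) are handled correctly. Clause (2) together with closure of \(C\) puts \(\lambda\) in \(C\). Clause (3), with the second-order variable ranging over \(\mathcal{P}(V_\lambda)\), says exactly that each \(S_\alpha\cap\lambda\) with \(\alpha<\lambda\) meets every club of \(\lambda\). Clause (1), however, does not do what you need.

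\textbf{The gap: regularity is not first order over \(V_\kappa\).} This holds even though \(\kappa\) is inaccessible. If the functions in (1) are elements of the structure, the sentence is true in \(V_\lambda\) for every limit ordinal \(\lambda\). Any \(f\in V_\lambda\) has rank below \(\lambda\), so its range is bounded below \(\lambda\).

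A cofinal map from some \(\mu<\lambda\) into \(\lambda\) is never an element of \(V_\lambda\). So your remedy of choosing \(\lambda\) large enough for the relevant functions to lie in \(V_\lambda\) cannot be arranged.

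This is a real failure, not a formality. Suppose every \(S_\alpha\) is the set of ordinals of countable cofinality. Then (2), (3) and your version of (1) all hold in \((V_\lambda,\in,S\cap V_\lambda,C\cap\lambda)\) for singular \(\lambda\in C\) of cofinality \(\omega_1\). So indescribability, as you applied it, may hand you a singular \(\lambda\notin T\). More generally, no first-order sentence can force regularity: the \(\lambda\) with \((V_\lambda,\in,S\cap V_\lambda,C\cap\lambda)\prec(V_\kappa,\in,S,C)\) form a club, which contains singular ordinals. The same confusion affects your closing remark on absoluteness. The fact that \(V_\lambda\) contains all subsets of its elements says nothing about unbounded subsets of \(\lambda\) itself.

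\textbf{The fix.} Make (1) second order: for every \(F\) ranging over subsets of the universe of the structure, if \(F\) is a function from an ordinal into the ordinals, then its range is bounded. Over \(V_\lambda\) this quantifies over all functions \(\beta\to\lambda\) with \(\beta<\lambda\), so it says exactly that \(\lambda\) is regular. Over \(V_\kappa\) it holds because \(\kappa\) is regular.

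Add first-order clauses making \(\lambda\) an uncountable limit ordinal, for instance that every ordinal has a successor and that a nonzero limit ordinal exists. The conjunction is still \(\Pi^1_1\), since the two universal second-order quantifiers can be merged into one by pairing, so indescribability still applies. Your remark that (3) contains the only second-order quantifier becomes false but harmless. The rest of your argument, giving \(\lambda\in T\cap C\) and hence stationarity of \(T\), then goes through unchanged.
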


Let \(A,B\subseteq \kappa\), we say that \(A\) and \(B\) are equivalent (in symbols, \(A\sim B\)) if and only if
there is a club \(C\subseteq \kappa\) such that \(A\cap C = B\cap C\).

\begin{definition}\label{definition:Gamma_Invariant}
Let $R$ be a ring of cardinality less than $\kappa$, and  $M$ be a \(\leq \kappa\)-generated $R$-module. Fix  a $\kappa$-filtration  $\{M_{\alpha} : \alpha < \kappa\}$ of $M$. Define the \emph{$\Gamma$-invariant} of $M$, which we denote by $\Gamma(M)$, to be the equivalence class of the set
    \begin{equation}
    \label{equation : E}
        E \coloneqq \big\{\alpha \in \kappa : \{\beta > \alpha : M_{\beta}/M_{\alpha} \text{ is not projective } \} \text { is stationary in } \kappa \big\}
    \end{equation}
modulo \(\sim\). With a slight abuse of language, we will write \(\Gamma(M) = E\) instead of \(\Gamma(M) =[E]_\sim\). In particular, we will write  \(\Gamma(M) = 0\) (or \(\Gamma(M) = \emptyset\)), when \(E\) is not stationary.
\end{definition}

We emphasize the following general properties about filtrations.

\begin{lemma}
\label{lemma : filtrations are equivalent}
    Let \(\kappa\) be a regular uncountable cardinal and let \(M\) be a \(\leq\! \kappa\)-generated module. Then 
    \begin{enumerate-(i)}
        \item M has a $\kappa$-filtration;
        \item
        \label{item : filtration club}
        If $\{M_{\alpha} : \alpha < \kappa\}$ and $\{M'_{\alpha} : \alpha < \kappa\}$ are two $\kappa$-filtrations of $M$ there is a club $C$ in $\kappa$ such that, for $\alpha \in C$, $M_{\alpha} = M'_{\alpha}$.
    \end{enumerate-(i)}
\end{lemma}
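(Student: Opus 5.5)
For part (i), I will fix a generating set \(\{x_\xi : \xi<\kappa\}\) of \(M\). If \(M\) is \(<\!\kappa\)-generated, repetitions are allowed. I then set
\[
M_\alpha \coloneqq \langle x_\xi : \xi<\alpha\rangle .
\]
Each \(M_\alpha\) is generated by \(|\alpha|<\kappa\) elements, so condition (ii) of a \(\kappa\)-filtration holds. The sequence is increasing by construction. For limit \(\alpha\), every element of \(M_\alpha\) is an \(R\)-combination of finitely many generators \(x_{\xi_1},\dots,x_{\xi_n}\) with each \(\xi_i<\alpha\). Such an element therefore already lies in \(M_\beta\) for \(\beta=\max_i \xi_i+1<\alpha\), which gives continuity. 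Since every element of \(M\) involves only finitely many generators, \(M=\bigcup_{\alpha<\kappa} M_\alpha\). This part is routine and uses only that \(\kappa\) is infinite.

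For part (ii), the plan is a standard interleaving (back-and-forth) argument, using regularity of \(\kappa\).

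\textbf{Step 1.} Define \(f\colon\kappa\to\kappa\) by letting \(f(\alpha)\) be the least \(\beta>\alpha\) with \(M_\alpha\subseteq M'_\beta\). Define \(g\) symmetrically, with the roles of the two filtrations swapped. To see that \(f\) is well-defined, note that \(M_\alpha\) is generated by a set \(Y\) with \(|Y|<\kappa\). Each \(y\in Y\) lies in some \(M'_{\gamma_y}\), because \(M=\bigcup_\gamma M'_\gamma\). Since \(|Y|<\kappa\) and \(\kappa\) is regular, \(\sup_y \gamma_y<\kappa\). As the \(M'_\gamma\) are increasing, \(Y\), and hence \(M_\alpha\), is contained in \(M'_\beta\) for any \(\beta\) above this supremum. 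This is the one place where regularity is essential, and I expect it to be the main point requiring care.

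\textbf{Step 2.} Let
\[
C \coloneqq \{\delta<\kappa : \delta \text{ is a limit ordinal and } f(\alpha),g(\alpha)<\delta \text{ for all } \alpha<\delta\}.
\]
\(C\) is closed because the defining condition is preserved under suprema, and limits of limit ordinals are limits. \(C\) is unbounded by the usual \(\omega\)-iteration. Starting from \(\alpha_0\), choose \(\alpha_{n+1}>\alpha_n\) above
\[
\sup\{f(\xi),g(\xi):\xi\le\alpha_n\},
\]
which is \(<\kappa\) by regularity. Then \(\sup_n\alpha_n\) lies in \(C\), and it is \(<\kappa\) because \(\kappa\) has uncountable cofinality.

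\textbf{Step 3.} For \(\delta\in C\), continuity gives \(M_\delta=\bigcup_{\alpha<\delta}M_\alpha\). Each such \(M_\alpha\) is contained in \(M'_{f(\alpha)}\subseteq M'_\delta\), since \(f(\alpha)<\delta\). Hence \(M_\delta\subseteq M'_\delta\), and the symmetric argument with \(g\) gives equality.
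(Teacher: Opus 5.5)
Your proof is correct. The paper states this lemma without proof and treats it as standard. Your argument is exactly the standard one: enumerate the generators for (i), and for (ii) take the club of limit ordinals closed under the functions \(f\) and \(g\), whose existence comes from the regularity of \(\kappa\). The only convention to keep explicit is that limit ordinals are nonzero, so \(0\notin C\). This matters because \(M_0\) and \(M'_0\) need not coincide, and it is the same convention implicit in the continuity clause of the definition of a \(\kappa\)-filtration.
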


Note that Lemma~\ref{lemma : filtrations are equivalent}\ref{item : filtration club} implies that the definition of \(\Gamma(M)\) does not depend on the filtration.

The following theorem and corollary, from \cite{Eklof_1993}, demonstrate that for an arbitrary $R$-module $M$ the condition $\Gamma(M) = 0$  implies that $M$ is ``very close" to being projective. In the case that $M$ is $\kappa$-projective, $\Gamma(M) = 0$ is equivalent to $M$ being projective. We include detailed proofs for completeness.

\begin{theorem}[Eklof, 1993]\label{theorem:Gamma(M)=0if_and_only_if_M=N_oplus_P_where_N_< kappa_and_P_is_projective}
    \label{lemma : Gamma(M)=0}
    $\Gamma(M) = 0$ if and only if  $M = N \oplus P$ for some modules $N$, $P$, where $|N| < \kappa$ and $P$ is projective.
\end{theorem}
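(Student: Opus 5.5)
We prove the two directions separately, working with a fixed $\kappa$-filtration $\{M_\alpha:\alpha<\kappa\}$ of $M$. Throughout, $\kappa$ is regular uncountable and $|R|<\kappa$, so that $<\!\kappa$-generated modules have cardinality $<\kappa$. By Lemma~\ref{lemma : filtrations are equivalent}\ref{item : filtration club} we may pass freely to any convenient filtration, since this changes $E$ only modulo a club.

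\emph{($\Leftarrow$)} Suppose $M=N\oplus P$ with $|N|<\kappa$ and $P$ projective. By Corollary~\ref{corollary:Kaplansky} write $P=\bigoplus_{i\in I}P_i$ with each $P_i$ countably generated. We may assume $|I|\le\kappa$, since $M$ is $\le\!\kappa$-generated and hence so is $P$. Enumerate $I=\{i_\xi:\xi<\kappa\}$, allowing repetitions or padding if $|I|<\kappa$. Define
\[
M'_\alpha = N\oplus \bigoplus_{\xi<\alpha}P_{i_\xi}.
\]
This is a $\kappa$-filtration of $M$: each $M'_\alpha$ is $<\!\kappa$-generated by regularity, and continuity at limits holds by construction. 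For $\alpha<\beta$ the quotient $M'_\beta/M'_\alpha\cong\bigoplus_{\alpha\le\xi<\beta}P_{i_\xi}$ is a direct sum of projectives, hence projective. Therefore the set $E$ computed from this filtration is empty, and so $\Gamma(M)=0$.

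\emph{($\Rightarrow$)} Suppose $E$ is not stationary, and choose a club $C$ disjoint from $E$. For each $\alpha\in C$, the set $S^\alpha=\{\beta>\alpha: M_\beta/M_\alpha \text{ not projective}\}$ is nonstationary, so we may choose a club $C_\alpha$ disjoint from it. For $\alpha\notin C$ put $C_\alpha=\kappa$. By Fact~\ref{fact : diagonal intersection}, $D=C\cap\bigtriangleup_{\alpha<\kappa}C_\alpha$ is a club.

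Fix $\alpha_0=\min D$ and enumerate $D=\{\delta_\nu:\nu<\kappa\}$ increasingly. We aim to show that $M_{\delta_{\nu+1}}/M_{\delta_\nu}$ is projective for every $\nu$, and also that $M_{\delta_\nu}/M_{\alpha_0}$ is projective. This is the main obstacle, because the diagonal intersection only guarantees that $\delta_{\nu+1}\in C_\alpha$ for $\alpha<\delta_{\nu+1}$, and not necessarily for $\alpha=\delta_\nu$ itself. Membership of $\delta_\nu$ in $C$ rules out stationarity of $S^{\delta_\nu}$, but the relevant club $C_{\delta_\nu}$ is only guaranteed to contain the points after $\delta_\nu$ in the diagonal intersection.

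To fix this, I would thin the club by recursion. Having chosen $\gamma_\nu\in C$, choose $\gamma_{\nu+1}\in C_{\gamma_\nu}\cap C$ above $\gamma_\nu$, and at limits take suprema; these lie in $C$ because $C$ is closed. Then each successor quotient $M_{\gamma_{\nu+1}}/M_{\gamma_\nu}$ is projective.

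The sequence $\{M_{\gamma_\nu}\}$ is then a continuous chain, with $\gamma_0=0$ after setting $M_0=0$ if needed. Each successor quotient is projective, so the sequence $0\to M_{\gamma_\nu}/M_{\gamma_0}\to M_{\gamma_{\nu+1}}/M_{\gamma_0}\to M_{\gamma_{\nu+1}}/M_{\gamma_\nu}\to0$ splits. Thus we may pick complements $Q_\nu\subseteq M_{\gamma_{\nu+1}}$ with $M_{\gamma_{\nu+1}}=M_{\gamma_\nu}\oplus Q_\nu$ and $Q_\nu\cong M_{\gamma_{\nu+1}}/M_{\gamma_\nu}$ projective. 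Continuity of the chain gives
\[
M=M_{\gamma_0}\oplus\bigoplus_{\nu<\kappa}Q_\nu.
\]
Taking $N=M_{\gamma_0}$, which is $<\!\kappa$-generated and hence of cardinality $<\kappa$ as $|R|<\kappa$, and $P=\bigoplus_\nu Q_\nu$, which is projective, yields the required decomposition.
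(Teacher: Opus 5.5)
Your strategy matches the paper's. In ($\Leftarrow$) you use the filtration $N\oplus\bigoplus_{i\in I_\alpha}P_i$ to get $E=\emptyset$. In ($\Rightarrow$) you take a club $C$ missing $E$ and clubs $C_\alpha$ witnessing projectivity of $M_\beta/M_\alpha$, then split $M$ along a continuous chain with projective successor quotients, whose first term serves as $N$.

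The ``main obstacle'' you identify does not exist. If $\delta_\nu<\delta_{\nu+1}$ are consecutive in $D=C\cap\bigtriangleup_{\alpha<\kappa}C_\alpha$, then $\delta_{\nu+1}\in C_\alpha$ for every $\alpha<\delta_{\nu+1}$. In particular this holds for $\alpha=\delta_\nu$, so $M_{\delta_{\nu+1}}/M_{\delta_\nu}$ is projective. This is exactly the paper's step, with $\alpha^+$ the successor of $\alpha$ in $C_1$. Your recursive thinning is also valid. It uses only $C$ and the clubs $C_{\gamma_\nu}$, so it even dispenses with Fact~\ref{fact : diagonal intersection}. But it is not repairing anything.

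There is a real error, though: you cannot take $\gamma_0=0$. The recursion needs $\gamma_\nu\in C$ at every stage, because only for $\alpha\in C$ does $C_\alpha$ witness anything, and $0$ need not lie in $C$. If $\gamma_0=0$ with $M_0=0$ were allowed, you would get $N=0$. That would make every $M$ with $\Gamma(M)=0$ projective, which is false. Any non-projective $M$ of cardinality $<\kappa$ is a counterexample: its filtration is eventually constant, so $\Gamma(M)=0$ (and indeed $0\in E$ there). Start instead at $\gamma_0=\min C$; then $N=M_{\gamma_0}$ is $<\!\kappa$-generated and your decomposition goes through.

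Two smaller cleanups:
\begin{enumerate-(i)}
\item The complements $Q_\nu\subseteq M_{\gamma_{\nu+1}}$ come from splitting $0\to M_{\gamma_\nu}\to M_{\gamma_{\nu+1}}\to M_{\gamma_{\nu+1}}/M_{\gamma_\nu}\to 0$ itself. Splitting the sequence modulo $M_{\gamma_0}$ only gives a complement inside $M_{\gamma_{\nu+1}}/M_{\gamma_0}$.
\item In ($\Leftarrow$), an enumeration with repetitions makes $\bigoplus_{\xi<\alpha}P_{i_\xi}$ meaningless as a submodule of $M$. Index by the set $\{i_\xi:\xi<\alpha\}$, or pad only with zero summands.
\end{enumerate-(i)}
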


    \begin{proof}
        Suppose that $M = N \oplus P$ where $|N|< \kappa $ and $P$ is projective. By Theorem~\ref{corollary:Kaplansky} we may write $P = \bigoplus_{i \in I} P_i$ as a direct sum of countably-generated projective submodules $P_i$. Then we may form a $\kappa$-filtration on $M$ consisting of modules isomorphic to $N  \oplus (\oplus_{i \in I_{\alpha}}P_i)$ for some subset $I_\alpha \subseteq I$. Fix $\gamma < \kappa $. Note that for $\gamma < \beta < \kappa $ the quotient
        \[\frac{N \oplus (\bigoplus_{i \in I_\beta} P_i)}{N \oplus (\bigoplus_{i \in I_\gamma}P_i)} \cong \bigoplus_{i \in I_{\beta} \setminus I_{\gamma}}P_i\]
        is a direct sum of projective modules, therefore it is projective. Hence, the set $E$ associated with the $\kappa$-filtration on $M$ is empty. Therefore $\Gamma(M) = 0$.
        
        Conversely, suppose that $\Gamma(M) = 0$. Define $E$ as in \eqref{equation : E}. Since $E$ is not stationary, there is some club $C$ in $\kappa$ such that $E \cap C = \emptyset$. Then for all $\alpha \in C$ there exists a club $C_{\alpha}$ in $\kappa$ such that $M_{\beta}/M_{\alpha}$ is projective for each $\beta \in C_{\alpha}$. Define 
        $C_1 = C \cap \triangle_{\alpha \in C} C_{\alpha}$. Since diagonal intersections of clubs are clubs, as mentioned in Fact~\ref{fact : diagonal intersection}, it follows that $C_1$ is a club. Note that $M = \bigcup_{\alpha \in C_1}M_{\alpha}$. For each $\alpha \in C_1$  denote the successor of $\alpha$ in $C_1$ by $\alpha^+$. The quotient $M_{\alpha^+}/M_{\alpha}$ is projective. Moreover, for $\tau$ the least element of $C_1$, $M = M_\tau \oplus (\oplus_{\alpha \in C_1} M_{\alpha^+}/M_{\alpha})$. Setting $M_{\tau} = N$ and $\oplus_{\alpha \in C_1} M_{\alpha^+}/M_{\alpha} = P$ we obtain the statement.
    \end{proof}
    
\begin{corollary}\label{corollary:If_M_is_almost_projective_then_Gamma(M)_=_0_if_and_only_if_M_is_projective}
    If $M$ is $\kappa$-projective, then $\Gamma(M) = 0$ if and only if $M$ is projective.
\end{corollary}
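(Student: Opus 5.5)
One direction is immediate from Theorem~\ref{lemma : Gamma(M)=0}: if \(M\) is projective, then \(M = 0 \oplus M\) with \(N=0\) of cardinality \(<\kappa\). Hence \(\Gamma(M)=0\). This direction does not even use \(\kappa\)-projectivity.

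For the converse, assume \(M\) is \(\kappa\)-projective and \(\Gamma(M)=0\). By Theorem~\ref{lemma : Gamma(M)=0} we can write \(M = N \oplus P\) with \(|N|<\kappa\) and \(P\) projective. Since direct sums of projective modules are projective, it suffices to show that \(N\) is projective. The plan is to realize \(N\) as a direct summand of a projective submodule of \(M\) coming from the witnessing family \(\mathcal{C}\) of Definition~\ref{definition:Almost-Projective_or_Kappa-Projective}.

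By Corollary~\ref{corollary:Kaplansky}, write \(P=\bigoplus_{i\in I}P_i\) with each \(P_i\) countably generated. The set \(N\) has cardinality \(<\kappa\), so by clause (ii) it lies in some \(C_0\in\mathcal{C}\). Since \(C_0\) is \(<\kappa\)-generated, each of its generators has nonzero components in only finitely many \(P_i\). Hence there is \(J\subseteq I\) with \(|J|<\kappa\) (using that \(\kappa\) is regular and uncountable, as in Lemma~\ref{lemma : filtrations are equivalent}) such that
\[
C_0\subseteq N\oplus \bigoplus_{i\in J}P_i .
\]

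The main obstacle is that \(C_0\) need not equal \(N\oplus\bigoplus_{i\in J}P_i\), so \(N\) is not yet visibly a summand of a projective module. I would resolve this by a closure (back-and-forth) argument of length \(\omega\). Starting from \(C_0\) and \(J_0=J\), alternately choose \(C_{n+1}\in\mathcal{C}\) containing \(N\oplus\bigoplus_{i\in J_n}P_i\) (this uses that \(\kappa\) is regular and uncountable, so the module is generated by fewer than \(\kappa\) elements, and clause (ii)), and choose \(J_{n+1}\supseteq J_n\) of size \(<\kappa\) with \(C_{n+1}\subseteq N\oplus\bigoplus_{i\in J_{n+1}}P_i\). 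The chain \(C_0\subseteq C_1\subseteq\cdots\) has length \(\omega<\kappa\), so by clause (iii) its union \(C_\omega\) lies in \(\mathcal{C}\) and is projective. On the other hand, the interleaving gives
\[
C_\omega \;=\; N\oplus\bigoplus_{i\in J_\omega}P_i,\qquad J_\omega=\bigcup_n J_n .
\]
Thus \(N\) is a direct summand of the projective module \(C_\omega\), so \(N\) is projective. Consequently \(M=N\oplus P\) is projective.
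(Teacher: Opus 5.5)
Your proof is correct, but for the converse it takes a different route from the paper.

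\textbf{The paper's route.} The paper works with filtrations:
\begin{enumerate-(1)}
\item it takes a $\kappa$-filtration $\{M_\alpha:\alpha<\kappa\}$ of $M$ by projective modules, via Proposition~\ref{lemma:Almost-Projective_or_Kappa-projective};
\item it asserts that $M$ also has a $\kappa$-filtration of the form $\{N\oplus\bigoplus_{i\in I_\alpha}P_i:\alpha<\kappa\}$;
\item it uses Lemma~\ref{lemma : filtrations are equivalent}\ref{item : filtration club} to get a club of $\alpha$ on which the two filtrations coincide, so $N$ is a summand of some projective $M_\alpha$.
\end{enumerate-(1)}

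\textbf{Your route.} You work directly with the witnessing family $\mathcal{C}$ and run the $\omega$-step closing-off argument yourself. This produces a single $C_\omega\in\mathcal{C}$ equal to $N\oplus\bigoplus_{i\in J_\omega}P_i$. In effect you prove the one instance of the club-agreement lemma that is needed.

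\textbf{What each buys.} Your version is self-contained and uses slightly weaker hypotheses. It needs only closure of $\mathcal{C}$ under chains of length $\omega$ and that $\kappa$ is uncountable. Regularity, which you invoke, is not actually used: $|J_n|\le\max(\mu,\aleph_0)<\kappa$ and $\omega<\kappa$ only require uncountability. Your version also avoids the filtration characterization. It further avoids the paper's unproved construction of the filtration $\{N\oplus\bigoplus_{i\in I_\alpha}P_i\}$, which requires bounding $|I|\le\kappa$ and choosing the $I_\alpha$ continuously. The paper's version is shorter given the filtration machinery already in place. It also keeps the argument inside the $\Gamma$-invariant framework used throughout Section~\ref{Section: Weak Compactness}.

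\textbf{One small point to state explicitly.} You treat $M=N\oplus P$ as an internal decomposition, so that $N$ is a subset of $M$ to which clause (ii) applies. This is harmless, since any abstract decomposition can be transported along the isomorphism.
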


    \begin{proof}
        If $M$ is projective, then $\Gamma(M) = 0$ by Theorem~\ref{lemma : Gamma(M)=0}. Conversely, suppose that $\Gamma(M) = 0$. Then by Theorem~\ref{lemma : Gamma(M)=0} we have that $M = N \oplus P$ for some projective module $P$ and module $N$ such that $|N|< \kappa$. Let $\{M_{\alpha}: \alpha < \kappa\}$ be a $\kappa$-filtration of $M$ consisting of projective modules. Since $P$ is projective, we may write $P = \bigoplus_{i \in I}P_i$ as a direct sum of countably-generated projective submodules of $P$. Then, $M$ admits a $\kappa$-filtration of the form  $\{N \oplus (\oplus_{i \in I_{\alpha}} P_i) : \alpha < \kappa\}$. Then there is some club $C$ in $\kappa$ such that for $\alpha \in C$, we have $N \oplus (\oplus_{i \in I_{\alpha}} P_i) = M_{\alpha}$, where $M_\alpha$ is projective. As $N$ is the summand of a projective module, $N$ is projective.
    \end{proof}

We now show that for a weakly compact cardinal $\kappa$, the local property of $\kappa$-projectivity is a strong enough property to imply projectivity.

\begin{proof}[Proof of Theorem~\ref{result!}]
    Suppose for contradiction  
    that a module $M$ is a counterexample. Let $\{M_{\alpha} : \alpha < \kappa\}$ be a $\kappa$-filtration as in the proof of Lemma~\ref{lemma:Almost-Projective_or_Kappa-projective}. Without loss of generality assume that $M_0 = \{0\}$, and that $M_{\alpha}$ is $<\! (|\alpha| + \aleph_0)$-generated  for each $\alpha$ (see \cite[IV.3.2]{Eklof_Mekler_2002}). Define the set $E$ associated to this filtration as in equation~\eqref{equation : E}. Then, by Corollary~\ref{corollary:If_M_is_almost_projective_then_Gamma(M)_=_0_if_and_only_if_M_is_projective} we have $\Gamma(M) \neq 0$. This means that \(E\) meets all club sets in \(\kappa\). In other words, $E$ is stationary in $\kappa$. For each $\alpha \in E$, let 
    \[S_{\alpha} = \{\beta > \alpha : \beta < \kappa \text{ and } M_{\beta}/M_{\alpha} \text{ is not projective}\}.\]
    Note that $S_\alpha$ is a stationary subset of \(\kappa\) by definition of $E$. Next, for any $\alpha \notin E$, we set $S_\alpha = E$. By Theorem~\ref{theorem:stationary_reflection} there is a stationary set $T\subseteq \kappa$ consisting of regular cardinals such that for each $\lambda \in T$ and every $\alpha < \lambda$ the intersection \[ S_{\alpha} \cap \lambda = \{\beta > \alpha : \beta < \lambda \text{ and } M_{\beta}/M_{\alpha} \text{ is not projective }\}
    \] is stationary in $\lambda$. 
    
    Now pick any \(\lambda\in T \) such that \(M_\lambda\) is projective. This can be done because \(T\) is stationary and the set \(\{\alpha< \kappa: M_\alpha\text{ is projective}\}\) is a club, since \(M\) is assumed to be \(\kappa\)-projective. Notice that for each  $\lambda \in T$, the indexed family $\{M_{\alpha} : \alpha < \lambda\}$ is a $\lambda$-filtration of $M_{\lambda}$. Observe that 
        \[\{\beta > 0 : \beta < \kappa \text{ and } M_{\beta}/M_{0} \text{ is not projective } \} \]
    is not stationary. Therefore $0 \notin E$, consequently \(S_0\) was defined to be \(E\) and $S_0 \cap \lambda = E \cap \lambda$ is stationary. Moreover, for each $\alpha \in E \cap \lambda$, the intersection $S_{\alpha} \cap \lambda$ is stationary in $\lambda$. Hence, 
        \[E \cap \lambda \subseteq \{\alpha < \lambda : \{\beta > \alpha : M_{\beta}/M_{\alpha} \text{ is not projective }\} \text{ is stationary in } \lambda \}.\]
    Therefore $\{\alpha < \lambda : \{\beta > \alpha \text{ and } M_{\beta}/M_{\alpha} \text{ is not projective }\} \text{ is stationary in } \lambda \}$ is stationary in $\lambda$, i.e., $\Gamma(M_{\lambda}) \neq 0$. Then $M_{\lambda}$ is not projective, a contradiction.
\end{proof}


\section{Strong Compactness}
\label{Section: Strong Compactness}

In this section we discuss  Theorem~\ref{strongresult!}. Our proof adapts an argument from the literature (e.g., see~\cite[Theorem~1.1]{Calderoni_Ostrem}) to the context of projective modules. The main challenge in generalizing existing results to the class of all modules is the fact that submodules of projective modules need not be projective. In addition, fixing the ring \(R\) does not guarantee that the class of \(R\)-modules is closed under ultraproducts.

We begin by recalling some definitions that are crucial to the following discussion. 
Let \(\{ M_i : i \in I\}\) be an indexed family of \(R\)-modules.
If \(x=(x_i)_{i\in I}\) is an element of the direct product \(\prod_{i\in I}M_i\), we define the \emph{support} of \(x\) by \(\supp(x)=\{i\in I: x_i\neq0\}\). For any filter \(D\) on \(I\), we define the submodule \(K_D\subseteq \prod_{i\in I} M_i\) by
\[x\in K_D \iff \{i\in I : x_i=0\}\in D.\]
The \emph{reduced product of \(\{M_i:i\in I\}\) with respect to \(D\)} is precisely the quotient \(\prod_{i\in I}M_i/K_D\). Whenever \(\mathcal{U}\) is an ultrafilter on \(I\), we say that \(\prod_{i\in I }M_i/K_\mathcal{U}\) is the \emph{ultraproduct of \(\{M_i:i\in I\}\) with respect to \(\mathcal{U}\)}. To simplify our notation, we denote ultraproducts by \(\prod_{i\in I }M_i/\mathcal{U}\).

\begin{definition}
    Let \(\lambda\leq \kappa\) be regular cardinals. We say \(\kappa\) is \emph{\(\lambda\)-strongly compact} if for any set \(I\), every \(\kappa\)-complete filter on \(I\) is contained in a \(\lambda\)-complete ultrafilter on \(I\).
\end{definition}

\begin{definition}\label{definition:Hereditary}
    A ring \(R\) is called \emph{left hereditary} if submodules of projective left \(R\)-modules are projective.\footnote{The reader might be familiar with an alternative definition of left hereditary. One can define \(R\) to be left hereditary if and only if every left ideal of \(R\) is a projective \(R\)-module. Due to a theorem of Kaplansky, it turns out that the two definitions are equivalent. (E.g., see the discussion in \cite[Section~2E]{Lam_1998}).}
    
    \end{definition}




The next lemma is the main ingredient towards proving Theorem~\ref{strongresult!}.

\begin{lemma}\label{lem:strong_compactness_result}
    Let \(\lambda < \kappa\) and \(\kappa\) be a \(\lambda\)-strongly compact cardinal. Let \(R\) be a ring of cardinality \(< \! \lambda\). Assume the following conditions 
        \begin{enumerate-(i)}
            \item
            \label{condition:(i)}
            \(R\) is hereditary;
            \item
            \label{condition:(ii)} 
            The class of projective \(R\)-modules is closed under ultraproducts.
        \end{enumerate-(i)}
    If \(M\) is a \(\kappa\)-projective  \(R\) module, then  \(M\) is projective.
\end{lemma}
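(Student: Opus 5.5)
We follow the ultraproduct strategy of \cite[Theorem~1.1]{Calderoni_Ostrem}. Let \(M\) be \(\kappa\)-projective, with \(\mathcal{C}\) the family from Definition~\ref{definition:Almost-Projective_or_Kappa-Projective}. Here \(M\) may have arbitrary size, so we read \(\kappa\)-projectivity as ``every \(<\!\kappa\)-generated submodule is contained in a \(<\!\kappa\)-generated projective submodule.'' Since \(R\) is hereditary (condition~\ref{condition:(i)}), every \(<\!\kappa\)-generated submodule of \(M\) is itself projective, being a submodule of a projective member of \(\mathcal{C}\). The goal is to realize \(M\) as a submodule of an ultraproduct of projective modules. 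Then condition~\ref{condition:(ii)} makes that ultraproduct projective, and heredity makes \(M\) projective.

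\textbf{Step 1: the index set and the filter.} Let \(I=[M]^{<\kappa}\) be the set of subsets of \(M\) of size \(<\!\kappa\). For \(X\in I\), let \(M_X=\langle X\rangle\), which is projective by the remark above. For \(x\in M\), put \(\hat{x}=\{X\in I : x\in X\}\). The sets \(\hat{x}\) generate a \(\kappa\)-complete filter \(F\) on \(I\): any intersection of fewer than \(\kappa\) of them contains a set of size \(<\!\kappa\), namely the union of the corresponding elements. By \(\lambda\)-strong compactness, extend \(F\) to a \(\lambda\)-complete ultrafilter \(\mathcal{U}\) on \(I\).

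\textbf{Step 2: the embedding.} Define \(\iota\colon M\to \prod_{X\in I}M_X/\mathcal{U}\) by sending \(x\) to the class of \((x_X)_X\), where \(x_X=x\) if \(x\in M_X\) and \(x_X=0\) otherwise. This is an \(R\)-homomorphism. Indeed, \(x+y\), \(x\), and \(y\) all lie in \(M_X\) for \(\mathcal{U}\)-almost every \(X\) (the set \(\hat{x}\cap\hat{y}\in F\)), and similarly for scalar multiples. It is injective: if \(x\neq0\), then \(x_X=x\neq0\) on \(\hat{x}\in\mathcal{U}\), so the class of \((x_X)_X\) is nonzero. Only \(\kappa\)-completeness of \(F\) is used here. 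The \(\lambda\)-completeness of \(\mathcal{U}\) and \(|R|<\lambda\) are the kind of hypotheses needed to verify condition~\ref{condition:(ii)} in concrete rings, and for Łoś-type arguments in the later application to Theorem~\ref{strongresult!}. In this lemma, condition~\ref{condition:(ii)} is simply assumed, so the ultraproduct \(\prod_X M_X/\mathcal{U}\) of projective modules is projective.

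\textbf{Step 3: conclude.} Since \(M\cong\iota(M)\) is a submodule of the projective module \(\prod_X M_X/\mathcal{U}\), heredity (Definition~\ref{definition:Hereditary}) gives that \(M\) is projective.

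The main obstacle is interpreting condition~\ref{condition:(ii)} correctly. If ``ultraproducts'' is meant only for \(\lambda\)-complete ultrafilters, or only for index sets of a particular form, we must check that our \(\mathcal{U}\) on \([M]^{<\kappa}\) qualifies; the construction above was designed so that it does. If \(M\) is \(\le\!\kappa\)-generated, one could instead index by a \(\kappa\)-filtration from Proposition~\ref{lemma:Almost-Projective_or_Kappa-projective} with a uniform ultrafilter on \(\kappa\). The \([M]^{<\kappa}\) version, however, handles \(M\) of arbitrary cardinality.
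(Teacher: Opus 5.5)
Your proof is correct and follows the paper's argument essentially verbatim. You use the same index set $\mathcal{P}_\kappa(M)$ and the same $\kappa$-complete filter generated by the sets $\{X : Y\subseteq X\}$, extended to a $\lambda$-complete ultrafilter. You embed $M$ into $\prod_{X}\langle X\rangle/\mathcal{U}$ the same way, and you use heredity at the same two points: to make each $\langle X\rangle$ projective, and to pass from the ultraproduct to $M$. The differences are cosmetic: you set $x_X=x$ when $x\in\langle X\rangle$ rather than when $x\in X$, which makes the homomorphism check slightly cleaner. Your remark that $\lambda$-completeness and $|R|<\lambda$ play no role in this lemma is accurate; in fact any ultrafilter containing all the sets $\hat{x}$ would do.
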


    \begin{proof}
       Denote the collection of subsets of \(M\) with cardinality \(<\!\kappa\) by \(\mathcal{P}_{\kappa}(M)\) and for each \(Y \in \mathcal{P}_{\kappa}(M)\) define \(S_Y := \{X \in \mathcal{P}_{\kappa}(M)\} : Y \subseteq X \}\). Note that 
            \[F_{\kappa}(M) := \{Z \subseteq \mathcal{P}_{\kappa}(M) : S_{Y} \subseteq Z \text{ for some } Y \in \mathcal{P}_{\kappa}(M)\}\]
        is a \(\kappa \!\)-complete filter on \(M\) which, since \(\kappa\) is \(\lambda \!\)-strongly compact, extends to a \(\lambda \!\)-complete ultrafilter \(\mathcal{U}\) on \(M\). For each $Y \in \mathcal{P}_{\kappa}(M)$ the submodule $\langle Y \rangle $ of $M$ must be of cardinality $< \! \kappa$. Since $M$ is $\kappa$-projective each submodule $\langle Y \rangle $  of M is projective by imposed property (ii). The ultraproduct $(\prod_{Y \in \mathcal{P}_{\kappa}(M)} \langle Y \rangle)/ \mathcal{U}$ is then projective by the imposed property (i). 

        We define for each $m \in M$ a function $\mathbf{m} \in \prod_{Y \in \mathcal{P}_{\kappa}(M)} \langle Y \rangle $ as follows: for any $X \in \mathcal{P}_{\kappa}(M)$ set $\mathbf{m}(X) = m$ if $m \in X$ and $\mathbf{m}(X) = 0$ otherwise. Now, define a map $\phi \colon M \to (\prod_{Y \in \mathcal{P}_{\kappa}(M)}\langle Y \rangle)/\mathcal{U}$ given by $\phi(m) = \overline{\mathbf{m}}$, where $\overline{\mathbf{m}}$ is the equivalence class of the element $\mathbf{m}$ of $\prod_{Y \in \mathcal{P}_{\kappa}(M)}\langle Y \rangle$ in $(\prod_{Y \in \mathcal{P}_{\kappa}(M)}\langle Y \rangle)/\mathcal{U}$.
        Let $m, n \in M$. For $Y \in S_{\{m, n, m+n\}} \in \mathcal{U}$,
           \[\phi(m)(Y) + \phi(n)(Y) = \overline{\mathbf{m}}(Y) + \overline{\mathbf{n}}(Y) = \overline{\mathbf{(m + n)}}(Y) =\phi(m + n)(Y) \]
        and for any $r \in R$, $Y \in \mathcal{S}_{rm}$,
            \[r\phi(m)(Y) = r  \overline{\mathbf{m}}(Y) = \phi(rm).\]
        So $\phi$ is an $R$-module homomorphism.

        To show that $\ker \phi = \{0\}$ suppose $m \in \ker \phi$ is nonzero. Then, for any $y \in S_{\{m\}}$ we have 
            \[\mathbf{m}(Y) = m \neq 0 = \mathbf{0}(Y).\]
    Since $S_{\{m\}}$ is contained in $\mathcal{U}$ we have $\overline{\mathbf{m}} \neq \overline{\mathbf{0}}$. Hence $\phi$ is an embedding. Therefore $M$ is isomorphic to a submodule of the ultraproduct $(\prod_{Y \in \mathcal{P}_{\kappa}(M)} \langle Y \rangle)/ \mathcal{U}$, which is projective by condition~\ref{condition:(ii)}. So, \(M\) is projective by condition~\ref{condition:(i)}.
    \end{proof}

In the remainder of this section, we discuss for which rings \(R\) the class of \(R\)-modules satisfies condition~\ref{condition:(ii)} of Lemma~\ref{lem:strong_compactness_result}. First, we say that an $R$-module $M$ is said to be \emph{finitely presented} if there exists an exact sequence
    \[0 \to K \to F \to M \to 0\]
where $F$ is free of finite rank (i.e., the cardinality of a basis for $F$ is finite) and $K$ is finitely generated. 

\begin{definition}
    A ring $R$ is called \emph{left (right) coherent} if every finitely generated left (right) ideal of $R$ is finitely presented. 
\end{definition}

If \(M\) is an \(R\)-module we say a submodule \(S \subseteq M\) is \emph{superfluous} if, for any submodule \(N \subseteq M\) such that \(S \oplus N = M\) then \(N = M\). A \emph{projective cover} for \(M\) is an epimorphism \(f \colon P \to M\) where \(P\) is a projective module and \(\ker f\) is a superfluous submodule of \(P\). 

\begin{definition}\label{definition:Perfect}
    A ring \(R\) is called \emph{left (right) perfect} if every left (right) \(R\)-module has a projective cover. 
\end{definition}

A celebrated theorem of Bass~\cite[Theorem~P]{Bass_1960} characterizes left-perfect rings. Before stating Bass' theorem, recall that an \(R\)-module \(M\) is called \emph{flat} if the functor \(- \otimes_R M\) preserves injections. 

\begin{theorem}(Bass, 1960) \label{theorem:Bass}
    The following are equivalent:
    \begin{enumerate-(i)}
        \item \(R\) is left perfect;
        \item \(R\) satisfies the descending chain condition on principal right ideals;
        \item
        \label{Bass : 3}
        Every flat left \(R\)-module \(M\) is projective. 
    \end{enumerate-(i)}
\end{theorem}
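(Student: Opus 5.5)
Since Theorem~\ref{theorem:Bass} is classical, I would prove it in Bass's original way, through the Jacobson radical \(J\) of \(R\). The central tool is a strong Nakayama lemma, which I would prove first. Call \(J\) \emph{left T-nilpotent} if for every sequence \(a_1,a_2,\dots\) in \(J\) there is \(n\) with \(a_1a_2\cdots a_n=0\).

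\textbf{Nakayama lemma.} If \(J\) is left T-nilpotent, then \(JN=N\) implies \(N=0\) for every left module \(N\). Equivalently, \(JN\) is superfluous in \(N\). The proof: if \(0\neq x_1\in N=JN\), write \(x_1=\sum a x\) and iterate, choosing at each stage a term that is still nonzero. This produces an infinite sequence \(a_1,a_2,\dots\) in \(J\) with all products \(a_1\cdots a_n\) nonzero, contradicting T-nilpotence.

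The cycle of implications is then as follows.

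\textbf{(ii)\(\Rightarrow\)(i).}
\begin{enumerate-(1)}
\item Show \(J\) is left T-nilpotent. Given \(a_i\in J\), the chain \(a_1R\supseteq a_1a_2R\supseteq\cdots\) stabilizes, say \(a_1\cdots a_nR=a_1\cdots a_{n+1}R\). Then \(a_1\cdots a_n=a_1\cdots a_na_{n+1}r\), so \(a_1\cdots a_n(1-a_{n+1}r)=0\). Since \(1-a_{n+1}r\) is a unit, \(a_1\cdots a_n=0\).
\item Show \(R/J\) is semisimple Artinian. It inherits the descending chain condition on principal right ideals and is semiprimitive. A minimal principal right ideal argument then shows it is a finite product of matrix rings over division rings.
\item Since \(J\) is nil, idempotents lift modulo \(J\).
\item Construct projective covers. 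For a left module \(M\), the module \(M/JM\) is a direct sum of simples \((R/J)\bar e_i\) with \(e_i\) idempotents of \(R\). Set \(P=\bigoplus_i Re_i\), which is projective, and lift \(P\to M/JM\) to a map \(f\colon P\to M\).
\item By the Nakayama lemma applied to \(\operatorname{coker} f\), the map \(f\) is onto. Moreover \(\ker f\subseteq JP\) by construction, and \(JP\) is superfluous by the Nakayama lemma, so \(f\) is a projective cover.
\end{enumerate-(1)}

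\textbf{(i)\(\Rightarrow\)(iii).} Let \(M\) be flat and take a projective cover \(0\to K\to P\to M\to0\).
\begin{enumerate-(1)}
\item Because \(K\) is superfluous in a projective module, \(K\subseteq JP\).
\item Flatness of \(M\) gives \(K\cap IP=IK\) for every right ideal \(I\); I would apply this only for \(I=J\), which is a two-sided ideal. Hence \(K=K\cap JP=JK\).
\item To conclude \(K=0\) I need (i)\(\Rightarrow\)\(J\) left T-nilpotent. This is a separate step: a left perfect ring has \(R/J\) semisimple and \(J\) left T-nilpotent. The standard proof uses the projective cover of the flat module \(\varinjlim(R\xrightarrow{\cdot a_1}R\xrightarrow{\cdot a_2}\cdots)\).
\item With T-nilpotence, the Nakayama lemma gives \(K=0\), so \(M\cong P\) is projective.
\end{enumerate-(1)}

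\textbf{(iii)\(\Rightarrow\)(ii).} This is Bass's construction.
\begin{enumerate-(1)}
\item Given \(a_1,a_2,\dots\), let \(F\) be free on \(x_1,x_2,\dots\), and let \(G\) be the submodule generated by the elements \(x_n-a_nx_{n+1}\).
\item Then \(F/G\) is the direct limit of \(R\xrightarrow{\cdot a_1}R\xrightarrow{\cdot a_2}\cdots\). It is flat as a direct limit of free modules, hence projective by (iii).
\item Therefore \(G\) is free on the given generators, and \(0\to G\to F\to F/G\to 0\) splits. Choose a retraction \(\rho\colon F\to G\) with \(\rho(g_n)=g_n\).
\item Write \(\rho(x_1)\) as a finite combination of the \(g_n\). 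Comparing coefficients of \(x_{n+1}\) in \(x_1=\rho(x_1)+\) (an element of a complement) and telescoping yields some \(n\) with \(a_1\cdots a_nR=a_1\cdots a_{n+1}R=\cdots\).
\end{enumerate-(1)}
Since the chain condition concerns arbitrary descending chains of principal right ideals, I would first reduce to chains of the form \(a_1R\supseteq a_1a_2R\supseteq\cdots\), which is always possible.

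\textbf{Main obstacles.} I expect the hardest part to be the coefficient bookkeeping in (iii)\(\Rightarrow\)(ii), namely extracting stabilization of \(a_1\cdots a_nR\) from the splitting. The T-nilpotence of \(J\) for a left perfect ring is a secondary difficulty. If the direct computation in (iii)\(\Rightarrow\)(ii) becomes messy, I would instead use Bass's characterization that \(F/G\) is projective exactly when \(G\) is a direct summand, and compute the dual basis.
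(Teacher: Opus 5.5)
The paper gives no proof of this theorem. It quotes it from Bass (Theorem~P), and it only uses (ii)$\Rightarrow$(i) in the example and (i)$\Rightarrow$(iii) in the Sabbagh--Eklof argument.

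Your cycle (ii)$\Rightarrow$(i)$\Rightarrow$(iii)$\Rightarrow$(ii) is Bass's own route and is sound in outline. The following all check out:
\begin{enumerate-(i)}
\item the T-nilpotent Nakayama lemma;
\item the unit trick $a_1\cdots a_n(1-a_{n+1}r)=0$;
\item the semisimplicity of $R/J$;
\item the cover $P=\bigoplus Re_i\to M$ with $\ker f\subseteq JP$;
\item the identity $K\cap JP=JK$ from flatness;
\item the reduction of arbitrary chains to chains $a_1\cdots a_nR$.
\end{enumerate-(i)}

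\textbf{The gap.} The one step you leave as a black box is that $J$ is left T-nilpotent when $R$ is left perfect. This is where the real content of (i)$\Rightarrow$(iii) sits, and your hint omits its key ingredient; flatness of the direct limit plays no role. The argument runs as follows.
\begin{enumerate-(1)}
\item With all $a_i\in J$, the identity $x_n=g_n+a_nx_{n+1}$ gives $F=G+JF$, so $M=F/G$ satisfies $JM=M$.
\item For a projective cover $P\to M$ with kernel $K$, this yields $P=JP+K$, hence $P=JP$ by superfluity.
\item You then need Bass's lemma that a nonzero projective module never satisfies $JP=P$. Realize $P$ as a summand of a free module with projection $\pi$. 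For $0\neq x\in P$, the coordinates of $x=\pi(x)$ on a suitable finite set of basis vectors satisfy $r=rA$ with $A$ a matrix over $J$. Since $I-A$ is invertible, $r=0$.
\item Hence $M=0$, so $x_1\in G$. Comparing coefficients in $x_1=\sum_{k\le N}r_kg_k$ forces $a_1\cdots a_N=0$.
\end{enumerate-(1)}
Your step $K\subseteq JP$ similarly rests on $\operatorname{Rad}P=JP$ for projective $P$, which is another of Bass's lemmas and should be cited or proved.

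\textbf{The bookkeeping in (iii)$\Rightarrow$(ii).} The part you were worried about does go through; the driving relation is this. Put $z_k=x_k-\rho(x_k)$. Since $\rho(g_k)=g_k$, you get $z_k=a_kz_{k+1}$, so $z_1=a_1\cdots a_nz_{n+1}$ for every $n$. Thus every coordinate of $z_1$ lies in $\bigcap_n a_1\cdots a_nR$.

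Now write $\rho(x_1)=\sum_{k=1}^N r_kg_k$. The coordinates of $z_1$ are $c_1=1-r_1$, $c_k=r_{k-1}a_{k-1}-r_k$ for $2\le k\le N$, and $c_{N+1}=r_Na_N$. These telescope:
\[
\sum_{k=1}^{N+1}c_k\,a_k\cdots a_N=a_1\cdots a_N,
\]
where the empty product for $k=N+1$ is $1$. Hence $a_1\cdots a_N\in a_1\cdots a_nR$ for every $n$, and the chain is constant from $N$ on.

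A minor point: $G$ is free on the $g_n$ for any sequence, by a triangular coefficient argument. This is not a consequence of (iii).
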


It follows that the classes of flat and projective left (right) modules coincide in left (right) perfect rings. While the definition of flatness will not be used in this paper, it is crucial that projective modules are flat. (E.g., see~\cite[Theorem~4.3]{Lam_1998}.)

The following result is attributed to Barbara Osofsky by Sabbagh and Eklof in a postscript to their paper~\cite{Sabbagh_Eklof_1971}:

\begin{theorem}[Osofsky; Sabbagh--Eklof,~1971]\label{Theorem: Sabbagh, Eklof, Osofsky}
    Let $R$ be a ring. The class of projective $R$-modules is closed under ultraproducts if and only if $R$ is left perfect and right coherent.
\end{theorem}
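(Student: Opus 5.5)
We prove both directions of Theorem~\ref{Theorem: Sabbagh, Eklof, Osofsky} using Theorem~\ref{theorem:Bass} together with the classical Chase-type characterization of coherence. That characterization says: \(R\) is right coherent if and only if every direct product of flat left \(R\)-modules is flat, if and only if every direct power \(R^I\) is flat as a left module.

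\textbf{Sufficiency.} Assume \(R\) is left perfect and right coherent, and let \(\{P_i : i\in I\}\) be projective and \(\mathcal{U}\) an ultrafilter on \(I\).
\begin{enumerate-(1)}
\item Each \(P_i\) is flat, so by right coherence \(\prod_{i\in I} P_i\) is flat.
\item The ultraproduct \(\prod_{i\in I} P_i/\mathcal{U}\) is a direct limit of the products \(\prod_{i\in A} P_i\) for \(A\in\mathcal{U}\), ordered by reverse inclusion, with the restriction maps as connecting maps. Indeed, an element vanishes in the limit exactly when it vanishes on some \(A\in\mathcal U\), which is exactly membership in \(K_{\mathcal U}\).
\item Each \(\prod_{i\in A} P_i\) is flat by step 1, and direct limits of flat modules are flat, so the ultraproduct is flat.
\item Since \(R\) is left perfect, Theorem~\ref{theorem:Bass}\ref{Bass : 3} shows the ultraproduct is projective.
\end{enumerate-(1)}

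\textbf{Necessity.} Assume the class of projective modules is closed under ultraproducts.

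For left perfectness, it suffices by Theorem~\ref{theorem:Bass} to show that every flat module is projective. By the Lazard--Govorov theorem, a flat module \(F\) is a direct limit of finitely generated free modules \(F_j\), \(j\in J\), over a directed set \(J\). Use the standard embedding of a direct limit into an ultraproduct. Take an ultrafilter \(\mathcal{U}\) on \(J\) containing all cones \(\{j : j\geq j_0\}\). Each element is represented by its images in the \(F_j\) for large \(j\). This gives a map
\[
F\to \prod_{j\in J} F_j/\mathcal{U}
\]
which is a pure embedding. The target is projective by hypothesis. So \(F\) is a pure submodule of a projective module. To conclude that \(F\) is projective, a finer argument is needed. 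One route is to show that the descending chain condition on principal right ideals holds, by building a countable ultraproduct of copies of \(R\) that would fail to be projective otherwise. Concretely, suppose \(a_1R\supsetneq a_1a_2R\supsetneq\cdots\). Bass's standard non-projective flat module \(\varinjlim(R\xrightarrow{a_1}R\xrightarrow{a_2}\cdots)\) then embeds purely, and ultimately as a summand, in \(R^{\mathbb N}/\mathcal{U}\). That would contradict projectivity.

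For right coherence, \(R^I\) must be shown flat for every set \(I\). This is the step I expect to be the main obstacle, since products are not ultraproducts. The plan is to note that a finite-support test suffices: flatness reduces to finitely generated right ideals \(\mathfrak a\), for which we need \(\mathfrak a\otimes R^I\to R^I\) injective. One then relates \(R^I\) to ultrapowers \(R^I/\mathcal U\), which are projective and hence flat. By Łoś, the relevant relations of \(\mathfrak a\) in \(R^I\) are witnessed coordinatewise, and the coordinatewise witnesses can be assembled using ultrafilters on finite subsets.

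This necessity direction, especially the reduction of coherence and of the Bass chain condition to ultraproduct closure, is where I expect most of the work to lie. I would follow Sabbagh--Eklof's treatment closely there.
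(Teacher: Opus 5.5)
Your sufficiency argument is correct and is the paper's argument in slightly different dress. The paper gets flatness of the ultraproduct from purity of $0\to K_{\mathcal U}\to\prod_{i\in I}P_i\to\prod_{i\in I}P_i/\mathcal U\to 0$. Your description of the ultraproduct as $\varinjlim_{A\in\mathcal U}\prod_{i\in A}P_i$ is one standard way of seeing that purity, and Chase's and Bass's theorems then finish both versions. The paper proves only this direction. It states that it knows of no published proof of necessity, so there is nothing in Sabbagh--Eklof to ``follow'' there, and your converse must stand on its own.

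Your converse has a genuine gap at the left-perfect step. The pure embedding of $F=\varinjlim(R\xrightarrow{a_1}R\xrightarrow{a_2}\cdots)$ (maps $x\mapsto xa_n$) into $R^{\mathbb N}/\mathcal U$ is fine. However, ``ultimately as a summand'' is asserted without argument, and such embeddings do not split in general. Over $\mathbb Z$ with all $a_n=p$ you get $\mathbb Z[1/p]$ purely inside $\mathbb Z^{\mathbb N}/\mathcal U$, which is $\aleph_1$-saturated, hence algebraically compact. Any summand of it would be algebraically compact, and $\mathbb Z[1/p]$ is not.

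Being a pure submodule of a projective contradicts nothing until you use a finiteness property of projectives; that is the missing idea. Let $P\oplus Q=G$ with $G$ free, and write $e=\sum_{k\le m}c_k\epsilon_k\in P$ in a basis of $G$. Then the pp-type of $e$ in $P$ (equal to its pp-type in $G$) is generated by $\varphi(x)=\exists\bar y\,(x=\sum_kc_ky_k)$: for each pp formula $\psi$ true of $e$, $\varphi\to\psi$ holds in every module (send $\epsilon_k\mapsto y_k$). Apply this to $P=R^{\mathbb N}/\mathcal U$ and to $e_0$, the image of $1$ from the $0$-th copy. Purity shows that such a $\varphi$ generates the pp-type of $e_0$ in $F$.

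Since $F$ is a direct limit, $\varphi$ already holds in $R$ of $a_1\cdots a_N$, the image of $1$ in the $N$-th copy, for some $N$. Also $\varphi$ implies $\exists y\,(x=a_1\cdots a_ny)$ for every $n$, because $e_0=a_1\cdots a_ne_n$. Hence $a_1\cdots a_N\in a_1\cdots a_nR$ for all $n\ge N$, so the chain stabilizes. Every chain $b_1R\supseteq b_2R\supseteq\cdots$ has this form ($b_{n+1}=b_na_{n+1}$), so Bass's theorem gives left perfectness. Equivalently, pure submodules of projectives are flat Mittag-Leffler, and countably generated flat Mittag-Leffler modules are projective by Raynaud--Gruson. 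The Lazard--Govorov detour is unnecessary.

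You also have the difficulty backwards: right coherence is the easy half. It does not go through ultrapowers of $R$, which need not contain $R^I$ purely. Instead, index by the finite subsets $S\subseteq I$ and choose $\mathcal U$ containing every cone $\{S: S\supseteq S_0\}$. Map $x\mapsto[(x|_S)_S]\in\prod_S R^S/\mathcal U$. By {\L}o\'s, a pp formula true of the image holds of the restrictions for $\mathcal U$-many $S$, hence of every coordinate, hence in $R^I$. So $R^I$ is a pure submodule of a projective, hence flat, module. It is therefore flat, and Chase gives right coherence.
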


Below we briefly outline the proof of the ``if'' direction, which is due to Sabbagh and Eklof~\cite{Sabbagh_Eklof_1971}.

\begin{proposition}
    \label{prop : reduced products flat}
Let \(\{M_i:i\in I\}\) be a family of \(R\)-modules and \(D\) be a filter on \(I\). If the direct product \(\prod_{i\in I} M_i\) is flat, then the reduced product \(\prod_{i\in I} M_i/D\) is flat.
\end{proposition}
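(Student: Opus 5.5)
The plan is to realize the reduced product as a direct limit of direct summands of the product and use that flatness is preserved by direct summands and by direct limits. Write \(P = \prod_{i\in I} M_i\). For each \(A\in D\), let \(P_A = \prod_{i\in A} M_i\). Then \(P = P_A \oplus \prod_{i\in I\setminus A} M_i\), so \(P_A\) is a direct summand of \(P\), and the projection \(\pi_A\colon P\to P_A\) is restriction of coordinates. Since \(P\) is flat and direct summands of flat modules are flat (tensoring commutes with finite direct sums, so a monomorphism tensored with \(P\) restricts to a monomorphism on the summand), each \(P_A\) is flat.

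Next, order \(D\) by reverse inclusion. This is a directed set because \(D\) is closed under finite intersections. For \(B\subseteq A\) in \(D\), the restriction maps \(P_A\to P_B\) form a directed system. I will show that
\[
\varinjlim_{A\in D} P_A \;\cong\; P/K_D .
\]
The maps \(\pi_A\) are compatible, so they induce a map \(\Phi\colon P\to\varinjlim P_A\). This map is surjective: any element of \(P_A\) extends by zero to an element of \(P\) that restricts to it. An element \(x\in P\) maps to \(0\) in the direct limit if and only if some restriction \(x|_A\) vanishes with \(A\in D\), which means \(\{i : x_i=0\}\supseteq A\), i.e.\ \(\{i : x_i = 0\}\in D\) by upward closure. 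Hence \(\ker\Phi = K_D\), giving the isomorphism.

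Finally, direct limits of flat modules are flat. For a monomorphism \(N'\to N\) of right modules, tensor product commutes with direct limits, and a directed colimit of monomorphisms is a monomorphism, since directed colimits are exact in module categories. Therefore \(P/K_D\) is flat.

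I expect no step to be a real obstacle. The only point needing care is the identification of the kernel of \(\Phi\) with \(K_D\). This uses the standard description of elements of a directed colimit: an element is zero exactly when it becomes zero at some later stage. The sidedness of the tensor product (left modules tensored with right modules) should be stated consistently with the convention of Theorem~\ref{theorem:Bass}.
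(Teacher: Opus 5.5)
Your proof is correct, but it takes a somewhat different route from the paper.

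The paper considers the sequence \(0\to K_D\to\prod_{i\in I}M_i\to\prod_{i\in I}M_i/D\to 0\). It asserts, without argument, that this sequence is pure exact, and then invokes the general fact that a pure quotient of a flat module is flat (the cited Corollary~4.86 of Lam). You instead identify the reduced product with the directed colimit \(\varinjlim_{A\in D}\prod_{i\in A}M_i\). From there you need only two elementary closure properties of flatness: closure under direct summands and closure under directed colimits. Your kernel computation, including the use of upward closure of \(D\), is exactly what that identification requires.

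Your route is self-contained and avoids purity altogether. It also supplies the justification the paper omits. Your colimit description exhibits the paper's sequence as the directed colimit of the split exact sequences \(0\to\prod_{i\in I\setminus A}M_i\to\prod_{i\in I}M_i\to\prod_{i\in A}M_i\to 0\) for \(A\in D\). Equivalently, \(K_D\) is a directed union of direct summands of the product, which is precisely why it is a pure submodule.

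The paper's route is shorter given the citation and applies verbatim to any pure quotient of a flat module. However, it leaves the purity claim to the reader.
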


\begin{proof}
    Let \(\pi_D\colon \prod_{i\in I} M_i \to \prod_{i\in I} M_i/D\) be the canonical projection and let \(K=\ker(\pi_D)\). We see that the short exact sequence \[0\to K\to\prod_{i\in I} M_i \to \prod_{i\in I} M_i /D \to 0\] is pure exact. Then we conclude that \(\prod_{i\in I} M_i/D\) is flat by \cite[Corollary~4.86]{Lam_1998}.
\end{proof}

\begin{theorem}[Sabbagh--Eklof,~1971]
    If \(R\) is left perfect and right coherent, then the ultraproduct of projective \(R\)-modules is projective.
\end{theorem}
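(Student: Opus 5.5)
By Theorem~\ref{theorem:Bass}\ref{Bass : 3}, over a left perfect ring every flat left \(R\)-module is projective. So it suffices to show that an ultraproduct \(\prod_{i\in I}P_i/\mathcal{U}\) of projective left \(R\)-modules is flat. Proposition~\ref{prop : reduced products flat} reduces this, for any filter and in particular for \(\mathcal{U}\), to showing that the full direct product \(\prod_{i\in I}P_i\) is flat. The plan has three steps.

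First, each \(P_i\) is projective, hence flat. Second, we need the direct product of flat left modules to be flat. This is where right coherence enters: Chase's theorem says that a ring \(R\) is right coherent if and only if arbitrary direct products of flat left \(R\)-modules are flat (equivalently, every direct power \(R^I\) of the left module \(R\) is flat). Third, with \(\prod_{i\in I}P_i\) flat, Proposition~\ref{prop : reduced products flat} shows that \(\prod_{i\in I}P_i/\mathcal{U}\) is flat, and Bass's theorem then makes it projective.

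The main obstacle is the second step, since the paper has not stated Chase's theorem. If we cite it, the proof is immediate. Otherwise we would sketch the relevant direction. Flatness of a left module \(N\) is tested by injectivity of \(J\otimes_R N\to N\) for finitely generated right ideals \(J\). Right coherence makes each such \(J\) finitely presented, say
\[
R^m\to R^n\to J\to 0 .
\]
For finitely presented right modules \(F\), the natural map \(F\otimes_R\prod_i N_i\to\prod_i (F\otimes_R N_i)\) is an isomorphism. Indeed, it is an isomorphism for \(F=R^n\), and both sides are right exact in \(F\), so the five lemma applies to a finite presentation.

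Apply this with \(F=J\) and \(F=R\). The map \(J\otimes_R\prod_i N_i\to\prod_i N_i\) is then identified with the product of the maps \(J\otimes_R N_i\to N_i\). Each of these is injective because \(N_i\) is flat, and a product of injective maps is injective. Hence \(\prod_i N_i\) is flat.

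The remaining ingredient is the purity claim inside Proposition~\ref{prop : reduced products flat}, which we take as given from the paper.
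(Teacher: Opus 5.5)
Your proposal is correct and follows the paper's proof exactly. The chain is the same: projective implies flat, right coherence makes \(\prod_{i\in I}P_i\) flat, Proposition~\ref{prop : reduced products flat} passes flatness to the ultraproduct, and Bass's theorem upgrades flat to projective. Your sketch of the needed direction of Chase's theorem, via \(F\otimes_R\prod_i N_i\cong\prod_i(F\otimes_R N_i)\) for finitely presented \(F\), is sound and justifies a step the paper only asserts.
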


\begin{proof}
    Suppose that \(R\) is left-perfect and right coherent, and let \(\{M_i:i\in I\}\) be any family of projective \(R\)-modules. In particular, each module \(M_i\) is flat. Since \(R\) is right coherent, the direct product \(\prod_{i\in I} M_i\) is flat. Then, by Proposition~\ref{prop : reduced products flat}, we conclude that any ultraproduct \(\prod_{i\in I} M_i/\mathcal{U}\) is flat. Since \(R\) is left-perfect, we conclude that \(\prod_{i\in I}M_i/\mathcal{U}\) is projective by Bass' characterization of left-perfectness. (See Theorem~\ref{theorem:Bass}\ref{Bass : 3}.)
\end{proof}

\begin{proof}[Proof of Theorem~\ref{strongresult!}] The statement follows from Lemma~\ref{lem:strong_compactness_result} and Theorem~\ref{Theorem: Sabbagh, Eklof, Osofsky}.
\end{proof}

Much to our regret, we could not find a proof of the other direction of Theorem~\ref{Theorem: Sabbagh, Eklof, Osofsky}. To the best of our knowledge, it was never published and only appeared in some personal communication
between B. Osofsky and the authors of \cite{Sabbagh_Eklof_1971} at the time when that paper was published. Nevertheless, we firmly believe this was a meaningful piece of mathematics and that such a gap in the literature needs to be filled.

Recall that whenever \(R\) is a p.i.d., an \(R\)-module \(M\) is free if and only if \(M\) is projective. Therefore, Theorem~\ref{result!} and Theorem~\ref{strongresult!} were already known in this particular case.
Also, whenever \(R\) is semisimple, all \(R\)-modules are projective. (See~\cite[Corollary~17.4]{Anderson_Fuller_1992}.)

However, there are examples of rings that are left-perfect, left-hereditary, and right-coherent (therefore, they satisfy the two conditions of Lemma~\ref{lem:strong_compactness_result}\ref{condition:(i)}--\ref{condition:(ii)}), but are not semisimple.

\begin{example}
    Let \(k\) be any field, and let \(T_2(k) =\{\begin{psmallmatrix}
        a&b\\
        0&c 
    \end{psmallmatrix}: a,b,c\in k\}\) the ring of upper triangular matrices over \(k\). It is well-known that \(T_2(k)\) is hereditary. (E.g., see~\cite[Example~2.36]{Lam_1998}.) Since \(T_2(k)\) is a finite dimensional \(k\)-algebra, \(T_2(k)\) is both left and right Artinian. (See~\cite[p.~39]{Lam_2001}.) 
    Right Artinian rings are necessarily right coherent.
    Moreover, since \(T_2(k)\) is right Artinian, \(T_2(k)\) satisfies the descending chain condition on (principal) right ideals. It follows that \(T_2(k)\) is left perfect by Theorem~\ref{theorem:Bass}.
    Last, straightforward computation shows that its Jacobson radical \(J \big( T_2(k)\big) = \{\begin{psmallmatrix}
        0&b\\
        0&0 
    \end{psmallmatrix}:b\in k\}\), thus \(T_2(k)\) is not semisimple by \cite[Theorem~4.14]{Lam_2001}.
\end{example}

\end{document}